\documentclass{amsart}
\usepackage{amsmath,amssymb,mathrsfs,bbm,yhmath,longtable,mathtools}
\usepackage{amsthm,rotating,xcolor,epsfig,hyperref,url,MnSymbol,rotate}
\usepackage{graphicx}
\usepackage[weather]{ifsym}

\usepackage{caption}
\usepackage{subcaption}

\usepackage{tikz}
\usetikzlibrary{knots}
\usetikzlibrary{decorations.pathmorphing,decorations.markings}
\usetikzlibrary{arrows,positioning}
\usetikzlibrary{svg.path}

\theoremstyle{plain}
\newtheorem{lemma}{Lemma}
\newtheorem{theorem}{Theorem}

\newcommand{\Int}{{\mbox{Int\ }}}

\DeclareMathOperator{\sign}{sign}

\newcommand{\crn}{\operatorname{cr}}

\newcommand{\cross}[1]{\#{#1}}

\newcommand{\PairPlus}[1]{\#\#{#1^{+}}}
\newcommand{\PairMinus}[1]{\#\#{#1^{-}}}
\newcommand{\PairPlusMinus}[1]{\#\#{#1^{\pm}}}

\newcommand{\CPlus}{{C^+}}
\newcommand{\CMinus}{{C^-}}
\newcommand{\CPlusMinus}{C^\pm}
\newcommand{\CMinusPlus}{C^\mp}

\newcommand{\CHPlus}{CH^+}
\newcommand{\CHMinus}{CH^-}
\newcommand{\CHPlusMinus}{CH^\pm}

\newcommand{\Plus}[1]{{{\scriptstyle +}\,\!#1}}
\newcommand{\Minus}[1]{{{\scriptstyle -}\,\!#1}}
\newcommand{\PlusMinus}[1]{{{\scriptstyle \pm}\,\!#1}}
\newcommand{\MinusPlus}[1]{{{\scriptstyle \mp}\,\!#1}}

\DeclareMathOperator{\lk}{lk}
\newcommand{\lkPlus}{{\lk^{+}}}
\newcommand{\lkMinus}{{\lk^{-}}}

\newcommand{\Angle}[1]{\langle{#1}\rangle}

\newcommand{\ModM}{\mathcal{M}}

%
\title{Homological Casson type invariant of knotoids}
\author{Vladimir Tarkaev}
\address{\noindent St.\ Petersburg State University, St.\ Petersburg, Russia}
 \address{\noindent Chelyabinsk State University, Chelyabinsk, Russia}
\address{\noindent Krasovskii Institute of Mathematics and Mechanics, Ural branch of the Russian Academy of Sciences, Ekaterinburg, Russia}
\thanks{The work is supported by the Russian Science Foundation
under grant~19-11-00151.}
\email{v.tarkaev@spbu.ru}


%
%


\begin{document}

\begin{abstract}
We consider an analogue of well-known Casson knot invariant for knotoids.
We start  with a direct analogue of the classical construction
which  gives two different integer-valued knotoid invariants
and then focus on its homology extension.
Value of the extension is a formal sum of subgroups of the first homology group $H_1(\Sigma)$
where $\Sigma$ is an oriented surface with (maybe) non-empty boundary in which knotoid diagrams lie.
To make the extension informative for spherical knotoids
it is sufficient to transform an initial knotoid diagram in $S^2$ into a knotoid diagram in the annulus
by removing small disks around its endpoints.
As an application of the invariants we prove two theorems:
a sharp lower bound of the crossing number of a knotoid
(the estimate differs from its prototype for classical knots proved by M.\,Polyak and O.\,Viro in 2001)
and a sufficient condition for a knotoid in $S^2$
to be a proper knotoid (or pure knotoid with respect to Turaev's terminology).
Finally we give a table containing values of our invariants
computed for all spherical prime proper knotoids having diagrams with at most $5$ crossings.
\end{abstract}

\maketitle

\section*{Introduction}
\label{section:Introduction}  

The concept of knotoid is introduced by V.\,Turaev~\cite{TuraevKnotoids}.
Later the subject was investigated by a few groups of researchers.
For a survey of existing works in the area
see~\cite{GKL}.
 For comprehensive tables of knotoids
see~\cite{GDS,KorablevMayTarkaev}.

Intuitively, knotoids can be considered as open-ended knot-type pictures up to an appropriate equivalence.
More precisely. Let $\Sigma$ be an oriented surface with (maybe) non-empty boundary.
Knotoid diagrams are generic immersions of the unit interval into $\Sigma$,
together with the under/over-crossing information at double points.
Knotoids are defined as the equivalence classes of knotoid diagrams under isotopies and the Reidemeister moves
(precise definitions are given in Section~\ref{sec:Preliminaries}).
In~\cite{TuraevKnotoids} Turaev shows
that knotoids in $S^2$ generalize knots in $S^3$
and that knotoids are closely related to knots in thickened surfaces via the closure operation.
Later in~\cite{KauffmanInvariants}
Kauffman and Gugumcu introduced and studied virtual knotoids
which generalize classical knotoids likewise virtual knots generalize classical knots.

Since knotoids are defined via knot-type diagrams
any diagram based invariant of knots (equally classical and virtual)
can be with appropriate changes extended to knotoids
(e.g., the knotoid group and the Kauffman bracket polynomial \cite{TuraevKnotoids},
the arrow polynomial and the affine index polynomial \cite{KauffmanInvariants}).
But unlike a knot diagram a knotoid diagram has endpoints.
The fact sometimes leads to pure knotoid features of such an extension.
For example, mentioned above Turaev's extension of the Kauffman bracket polynomial
has an additional indeterminate  counting intersections of an arc connecting the endpoints
with the rest part of a diagram.
In~\cite{Kutluay} Kutluay  defines winding homology --- a Khovanov-type invariant of knotoids categorifying this polynomial.
An additional grading in winding homology corresponds with additional indeterminate in the extended bracket polynomial.
These improvements makes the polynomial and winding homology
more informative than direct analogues of the Kauffman bracket polynomial and Khovanov homology
which are also studied in~\cite{TuraevKnotoids} and~\cite{Kutluay}.

 In present paper we deal with  similar situation:
we consider another well-known invariant of classical knots,
follow~\cite{PolyakViro2001} we call it the Casson knot invariant.
Its construction is not applicable to knots in thickened surfaces whenever the surface is not $S^2$
(see Remark in the end of Section~\ref{sec:K2_1})
but in the case of knotoid diagrams it works without any changes.
However, the nature of knotoids enables a pure knotoid strengthening of the extension.

The Casson knot invariant (see~\cite{PolyakViro2001})
can be defined in the terms of  a specific alternation of overpasses and underpasses in pairs of crossings.
We call such a configuration the skew pair of crossings
(for precise definition see Section~\ref{sec:SkewPairs}).
For each skew pair we define the sign ($\pm 1$),
the value of the invariant is the sum of these numbers over all skew pairs.
The same (word-for-word) gives an invariant of a knotoid.
Moreover, in the classical case it is necessary to prove the independence of resulting value on the choice of base point,
while in the case of knotoid diagram we have no freedom in the choice
because no point except the beginning (and points lying in its small neighborhood) can play the role.
That makes the situation simpler than its classical prototype.

At the same time the Casson-type invariant of knotoid obtains some new properties.
First of all, it admits a homological extension.
The idea is following
(precise definition is given in Section~\ref{sec:CHPlusMinus}).
There is a natural way to associate with a crossing $x$ in a knotoid diagram $D \subset \Sigma$ a loop $l(x)$ in the surface $\Sigma$.
Then we can associate with a skew pair $(x,y)$  of crossings the subgroup $H(x,y)$
of the first homology group $H_1(\Sigma)$
generated by the homology classes $[l(x)]$ and $[l(y)]$.
Finally we take a formal sum over all skew pairs of such a subgroups multiplying by the signs of corresponding pairs.
The resulting value $\CHPlusMinus$ is an invariant of a knotoid up to automorphism of the group $H_1(\Sigma)$
(see Theorem~\ref{theorem:CHPlusMinus}).
The direct (integer-valued) analog of the Casson knot invariant is also an invariant of a knotoid
but it is weaker than the homological extenntion
(see Section~\ref{sec:Table}).
Note that $\CHPlusMinus$ also makes sense in the case  of knotoids in the $2$-sphere.
To this end it is sufficient  to transform  initial  knotoid into a knotoid in the annulus (see Section~\ref{sec:InS^2})
by removing small disks around the endpoints.
The first homology group of the annulus is non-trivial
that allows  the invariant to be informative
(see Sections~\ref{sec:Example}
and~\ref{sec:Table}).

As an application we prove two theorems.
\\
Theorem~\ref{theorem:crn}
(Section~\ref{sec:CrossingNumber}):
Let $\crn(K)$ denotes the crossing number of a knotoid $K$ then
$\|\CHPlus(K)\| +\|\CHMinus(K)\| \leq [\frac{(\crn(K))^2}4]$.
The inequality above is an analogue of the known lower bound
$|C(k)| \leq [\frac{(\crn(k))^2}8]$ where $C(k)$ denotes the Casson invariant of a knot $k$
(see~\cite{PolyakViro2001}).
It is necessary to emphasize that both these estimates are sharp
in the following sense: there exists infinite families of knotoids and knots, respectively,
for which corresponding inequality becomes equality.
\\
Theorem~\ref{theorem:Knot-type}
(Section~\ref{sec:InS^2}):
if $\CHPlus(K) \not=\CHMinus(K)$ for a knotoid $K$
then the knotoid $K$ is proper (or pure with respect to  Turaev's terminology).

Note one more feature of the Casson-type invariant of a knotoid.
It is well-known (see, for example,~\cite{PolyakViro2001})
that the Casson knot invariant can be defined
using two different combinations of over/under-passes
(we call them the upper skew pairs and the lower skew pairs).
In the classical case two corresponding definitions give two  coinciding invariants,
while in the case of knotoid we obtain two invariants $\CPlus$ and $\CMinus$,
and in general they do not coincide
(for corresponding examples  see Sections~\ref{sec:Example}
and~\ref{sec:Table}).
If $\CPlus(K) \not=\CMinus(K)$ for a knotoid $K$
then $K$ is proper knotoid.
This simple  condition is weaker than analogous conditions via $\CHPlusMinus$,
but it is also quite  effective
(see Remark~4 in Section~\ref{sec:Table}).

The paper is organize as follows.
In Section~\ref{sec:Preliminaries}
we recall definitions we need.
In Section~\ref{sec:SkewPairs}
we consider a direct integer-valued analogue of the Casson invariant,
in particular, we prove an  analogue of the well-known skein relation for the Casson knot invariant.
Section~\ref{sec:HomologicalExtention} is central in the paper.
Here we define a homological extension of Casson-type invariant
and discuss some its applications.
In Section~\ref{sec:Example}
we give three examples of computation  of our invariants.
Section~\ref{sec:Table}
contains a table in which we write values of our invariants of all $31$ proper knotoids
having diagram with at most $5$ crossings.
Finally we give a few remarks concerning the table.

\section{Preliminaries}
\label{sec:Preliminaries}

Throughout $\Sigma$ denotes an oriented surface
with (maybe) non-empty boundary.

\subsection{Knotoids}
\label{sec:Knotoid}

A \emph{knotoid diagram} $D$ in the surface $\Sigma$ 
  is a smooth generic immersion of the (closed) segment $[0,1]$ into $\Sigma$
whose only singularities are transversal
double points endowed with standard over/under-crossing data.
By abuse of the language, we will use the same notation both for the immersion and for its image.
The images of $0$ and~$1$ under
this    immersion are called the    \emph{beginning} and the    \emph{end} of $D$, respectively.
   These two points are   distinct from each  other and from the double points; they are 
  called the \emph{endpoints} of~$D$.
Below we think that all knotoid diagrams are oriented from the beginning to the end.
If $\partial \Sigma \not=\emptyset$ then the endpoints of a knotoid diagram
can lie in the boundary of the surface
while all other points of the diagram lie in the interior of $\Sigma$.
The double points  of $D$ are    called the \emph{crossings} of $D$.
The set of crossings of $D$ is denoted by $\cross{D}$.
  
Knotoid diagrams $D_1,D_2 \subset \Sigma$ are \emph{(ambient) isotopic}
if there is an isotopy of $\Sigma$ in itself transforming $D_1$ into $D_2$.
In particular, an isotopy of a knotoid diagram   may displace the endpoints
but if an endpoint lies on $\partial \Sigma$ then the isotopy may move it along corresponding boundary component only.

We define  three  \emph{Reidemeister moves} $\Omega_1, \Omega_2, \Omega_3$ on   knotoid
diagrams.  The move $\Omega_i$ 
on a    knotoid diagram $D$ preserves~$D$ outside a closed $2$-disk disjoint
   from the endpoints and modifies $D$ within this
   disk   as the standard  $i$-th Reidemeister move, for $i=1,2,3$  (pushing a branch of
$D$ over/under the endpoints is not
   allowed).
   
A \emph{knotoid} is defined to be an equivalence class of knotoid diagrams
under the Reidemeister moves $\Omega_1,\Omega_2,\Omega_3$ and ambient isotopies.

\subsection{The sign of a crossing}
\label{sec:Sign}

Recall the standard notion of the sign of a crossing.
Given a knotoid diagram $D \Sigma$ and $x \in \cross{D}$,
denote by $\overrightarrow{b_1},\overrightarrow{b_2}$
the positive tangent vectors of over-crossing and under-crossing branches of $D$ at the crossing $x$, respectively.
Then $\sign(x) =1$ if the pair $(\overrightarrow{b_1},\overrightarrow{b_2})$
forms a positive basis in the tangent space of $\Sigma$ at the point $x$ with respect to the orientation of $\Sigma$.
Otherwise $\sign(x)=-1$.

\subsection{The Gauss code}
\label{sec:GaussCode}

The \emph{Gauss code} of a knotoid diagram
is defined by analogy with the Gauss code of a diagram of a classical knot.
The code consists of letters from a finite alphabet equipped with the signs plus or minus.
The alphabet is in a bijective correspondence with the set of crossings of the diagram.
The sign``$+$'' (resp. ``$-$'') means the overpassing
(resp. underpassing) through corresponding crossing.
Such a pairs (sign, letter) is called
an \emph{items} of the code.
Unlike the classical case in which the Gauss code is defined up to cyclic permutation
the Gauss code of a knotoid diagram is uniquely determined.
To obtain Gauss code of a diagram
we label all crossings with corresponding letters
and then go along the diagram from the beginning to the end.
When we meet a crossing we write its label with the sign plus or minus
depending on which branch (the upper or the lower) we walk along.
In Section~\ref{sec:Example}
we give three examples of Gauss code of knotoid diagrams.

In the case of knotoid
we can say that an  item is placed in the Gauss code before another item
and write $item_1 < item_2$.
It is necessary to emphasize that the expression means that $item_1$ is placed in the code before $item_2$
but not necessary just before,
i.e., some other items can be placed between the two.

\subsection{The arrow diagrams}
\label{sec:ArrowDiagram}

In addition to the language of the Gauss code
we will use another well-known language --- the Gauss arrow diagrams.
In the case of knotoids the definition of an arrow diagram is completely analogous to that in the classical case
with the only obvious difference --- endpoints of arrows lie not on the circle
but on the oriented segment.
As usual the arrow representing a crossing
is oriented from the over-crossing to corresponding under-crossing.

\section{Direct analog of Casson knot invariant}
\label{sec:SkewPairs}

In this section we consider a direct analogue of the Casson knot invariant.
The approach we use follows to that in~\cite{PolyakViro2001}.

Given a knotoid diagram $D$,
an ordered pair $(x,y)$, $x,y \in \cross{D},x \not=y,$   is called
the \emph{upper skew pair} (resp. the \emph{lower skew pair})
if $\Plus{x} < \Minus{y} < \Minus{x} < \Plus{y}$
(resp. $\Minus{x} < \Plus{y} < \Plus{x} < \Minus{y}$).
In Fig.~\ref{fig:1a} we show a fragment of a diagram which gives
an important particular case of a skew pair.
In Figs.~\ref{fig:1b} and~\ref{fig:1c} we draw subdiagrams of an arrow diagram
characterizing the upper and the lower skew pairs, respectively.

\begin{figure}[h!]
\centering 
       \begin{subfigure}[b]{0.3\textwidth}
                \centering
                \psfig{figure=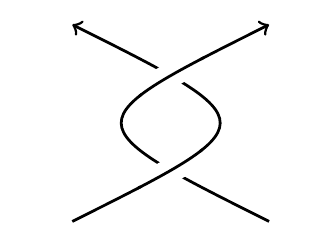, width=\textwidth}
                \caption{}
                \label{fig:1a}
        \end{subfigure}%
       \begin{subfigure}[b]{0.3\textwidth}
                \centering
                \psfig{figure=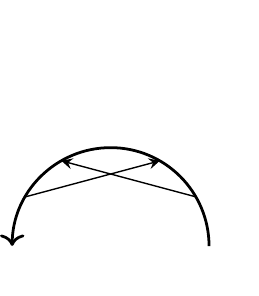, width=\textwidth}
                \caption{}
                \label{fig:1b}
        \end{subfigure}%
       \begin{subfigure}[b]{0.3\textwidth}
                \centering
                \psfig{figure=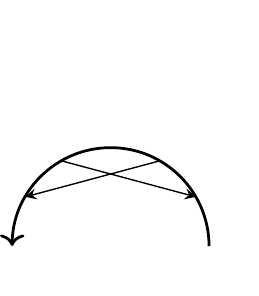, width=\textwidth}
\caption{}
                \label{fig:1c}
        \end{subfigure}%
\caption{}
\label{fig:1}
\end{figure}

Denote by $\PairPlus{D}$ (resp. $\PairMinus{D}$) the subset of $\cross{D}\times\cross{D}$
consisting of all the upper (resp. the lower) skew pairs of crossings.

Let $(x,y)$ is a skew (either upper or lower) pair of crossings.
The \emph{sign of the skew pair} is defined to be
\begin{equation*} \label{eq:SignOfPair}
\sign(x,y) =\sign(x) \sign(y).
\end{equation*}

Following value is a complete analogue  of the Casson knot invariant in the case of knotoid.
\begin{equation} \label{eq:CPlusMinus}
\CPlusMinus(D) =\sum_{(x,y) \in \PairPlusMinus{D}} \sign(x,y)
\in \mathbb{Z}.
\end{equation}

In the classical case $\CPlus =\CMinus$
(see~\cite[Corollary 1.C]{PolyakViro2001})
while in the case of knotoids these value can be different
(see examples in Sections~\ref{sec:K2_1} and~\ref{sec:K4_6} below).

\begin{theorem} \label{theorem:CPlusMinus}
If $D_1,D_2$ are two diagrams of the same knotoid $K$
then
$$\CPlusMinus(D_1) =\CPlusMinus(D_2).$$
\end{theorem}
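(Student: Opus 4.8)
The plan is to verify invariance under each of the three Reidemeister moves $\Omega_1,\Omega_2,\Omega_3$, since ambient isotopy does not change the Gauss code and hence changes nothing. Because the Gauss code of a knotoid diagram is canonically ordered (no cyclic-permutation ambiguity), a skew pair is a purely combinatorial condition on the linear word: $(x,y)\in\PairPlus{D}$ iff the four items $\Plus{x},\Minus{y},\Minus{x},\Plus{y}$ occur in exactly that order, and similarly for $\PairMinus{D}$. So the whole argument reduces to tracking how the word changes under a move, together with the signs of the crossings involved.

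First I would treat $\Omega_1$: it inserts or deletes a pair of consecutive items $\Plus{a}\Minus{a}$ (or $\Minus{a}\Plus{a}$) belonging to the new crossing $a$. I claim $a$ cannot be a member of any skew pair: in $\Plus{x}<\Minus{y}<\Minus{x}<\Plus{y}$ the two items of $x$ are separated by an item of $y$ (and vice versa), so a crossing whose two items are adjacent can play neither role; the same holds for the lower pattern. Moreover, inserting two adjacent items between existing items of a word never changes the relative order of any other pair of items, so no skew pair among the old crossings is created or destroyed. Hence $\CPlusMinus$ is unchanged. For $\Omega_2$ I would do the analogous bookkeeping: it inserts/removes two crossings $a,b$ with the local pattern contributing, say, $\Plus{a}\Plus{b}$ at one place and $\Minus{b}\Minus{a}$ (or $\Minus{a}\Minus{b}$, depending on orientations) at another, with $\sign(a)=-\sign(b)$. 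One checks that the four new items again cannot complete a skew pattern with an old crossing in a net way: any skew pair involving exactly one of $a,b$ comes in a cancelling partner obtained by swapping $a\leftrightarrow b$, and these have opposite signs because $\sign(a)\sign(b)=-1$; a skew pair formed by $a$ and $b$ alone is impossible since their items appear as two adjacent blocks. And again the relative order of items of old crossings is untouched. So the contributions cancel in pairs and $\CPlusMinus$ is preserved.

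The main obstacle is $\Omega_3$, where three crossings $a,b,c$ persist but the three strands are reshuffled; the Gauss code changes by transposing certain adjacent items, and the signs of $a,b,c$ are unchanged. Here I would follow the strategy of Polyak--Viro: first observe that an $\Omega_3$ move can only alter the order of items of the three crossings $a,b,c$ relative to each other and to items that lie ``between'' the two $\Omega_3$-disks along the diagram, but in fact, because the move is supported in a disk, only the mutual order of the six items of $a,b,c$ changes — every other item keeps its position and its order relative to all six. So it suffices to check, by a finite case analysis over the local configurations of an $\Omega_3$ move (the over/under assignments on the three strands, and the two ways each strand is oriented, up to the symmetries of the picture), that the signed count of skew pairs drawn entirely from $\{a,b,c\}$, and the signed count of skew pairs pairing one of $a,b,c$ with a fixed outside crossing, are both invariant. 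The outside-crossing tallies reduce to: for each of $a,b,c$ and each position an outside item can occupy, does the move change whether the skew pattern is realized — and one verifies the changes cancel across the three crossings. For the all-internal tally, one enumerates the before/after Gauss words on six letters and checks the signed skew-pair count (using $\sign(x,y)=\sign(x)\sign(y)$ and the fact that the individual signs are fixed) agrees. This is the routine-but-lengthy heart of the proof; it is exactly the knotoid analogue of \cite[Corollary~1.C and its proof]{PolyakViro2001}, with the simplification that we never have to check independence from a basepoint because the beginning of the diagram furnishes a canonical one. Finally, I would remark that the same case analysis simultaneously handles both $\PairPlus{}$ and $\PairMinus{}$, so Theorem~\ref{theorem:CPlusMinus} holds for $\CPlus$ and $\CMinus$ at once.
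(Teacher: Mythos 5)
Your proposal is essentially the paper's argument: the paper also establishes invariance by checking the Reidemeister moves via the combinatorics of the ordered Gauss code, with $\Omega_1$ and $\Omega_2$ handled exactly as you do (adjacency of the new items, and cancellation of the $(a,z)$/$(b,z)$ contributions using $\sign(a)=-\sign(b)$). Two organizational differences are worth noting. First, the paper never proves Theorem~\ref{theorem:CPlusMinus} directly; it deduces it from the stronger homological statement (Theorem~\ref{theorem:CHPlusMinus}), whose proof is your argument enriched with the check that the cancelling skew pairs also determine the same subgroup of $H_1(\Sigma)$ --- for the integer-valued invariant that extra check is unnecessary, so your direct route is a legitimate shortcut. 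Second, for $\Omega_3$ the paper invokes Polyak's theorem that one oriented version of each of $\Omega_1$ (two versions), $\Omega_2$ and $\Omega_3$ generates all oriented Reidemeister moves, which collapses your ``finite case analysis over the over/under assignments and orientations'' to a single local picture; what remains is then only the enumeration of the orders in which the three strands of the $\Omega_3$-disk are traversed from the beginning of the diagram (the paper finds either no internal skew pairs or exactly two with opposite signs). You should make that last enumeration explicit in your $\Omega_3$ step --- the existence of internal skew pairs depends on the traversal order of the three strands, not just on the local picture --- and note that the pairs involving an outside crossing are in fact preserved bijectively (same pattern, same sign) rather than merely cancelling. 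With those points spelled out, your plan is sound.
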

Therefore, one can say about $\CPlusMinus(K)$
and the values are an invariants of a knotoid $K$.

Theorem~\ref{theorem:CPlusMinus}
is a consequence of Theorem~\ref{theorem:CHPlusMinus}
in Section~\ref{sec:HomologicalExtention} below.

Following properties of $\CPlusMinus$ are straight forward.

\begin{lemma}
Let $D \subset \Sigma$ be a knotoid diagram in the surface $\Sigma$.

{\bf 1.}
If $F: \Sigma \to \Sigma$ is (maybe) orientation reversing homeomorphism of the surface
and $D_1 =F(D)$ then
$\CPlusMinus(D_1) =\CPlusMinus(D)$.

{\bf 2.}
If the diagram $D_2$ is obtained from $D$ by simultaneous switching of all its crossings 
(i.e., all over-crossings in $D$ are replaced with under-crossings and vice versa)
then
$\CPlusMinus(D_2) =\CMinusPlus(D)$.

{\bf 3.}
If the diagram $D_3$ is obtained from $D$ by reversing of the orientation of the diagram then
$\CPlusMinus(D_3) =\CPlusMinus(D)$.

{\bf 4.}
If $K_1,K_2$ are knotoids in surfaces $\Sigma_1,\Sigma_2$, respectively,
then
$$\CPlusMinus(K_1 \cdot K_2) =\CPlusMinus(K_1) +\CPlusMinus(K_2)$$
where $K_1 \cdot K_2$ denotes the product of $K_1$ and $K_2$ in the sense of the semigroup of knotoids
(see~\cite{TuraevKnotoids}).
\end{lemma}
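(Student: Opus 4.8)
The plan is to reduce all four statements to two elementary observations: how the (uniquely determined) Gauss code of $D$ changes under each operation, and how the crossing sign $\sign(x)$ changes. The key point is that $\CPlusMinus(D)$ depends only on the Gauss code of $D$ and on the function $x \mapsto \sign(x)$, since the defining conditions $\Plus{x} < \Minus{y} < \Minus{x} < \Plus{y}$ and $\Minus{x} < \Plus{y} < \Plus{x} < \Minus{y}$ refer only to the order of items in the code, and $\sign(x,y) = \sign(x)\sign(y)$. For \textbf{part 1}, I would note that a homeomorphism $F$ identifies $\cross{D}$ with $\cross{D_1}$ preserving over/under data and the order of items along the diagram, so $D$ and $D_1$ have literally the same Gauss code, hence $\PairPlus{D} = \PairPlus{D_1}$ and $\PairMinus{D} = \PairMinus{D_1}$; if $F$ is orientation-reversing then $\sign(x)$ flips at every crossing, but then every product $\sign(x)\sign(y)$, and therefore $\CPlusMinus$, is unchanged.

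\textbf{Parts 2 and 3} are variations on the same theme. Switching all crossings of $D$ turns each item $\Plus{x}$ into $\Minus{x}$ and back, so it flips all signs in the Gauss code; then the pattern defining an upper skew pair of $D$ is exactly the pattern defining a lower skew pair of $D_2$, giving $\PairPlus{D} = \PairMinus{D_2}$ and $\PairMinus{D} = \PairPlus{D_2}$. Switching a crossing interchanges its over- and under-branches, so it flips $\sign(x)$, whence $\sign(x,y)$ is preserved and $\CPlus(D_2) = \sum_{(x,y)\in\PairMinus{D}}\sign(x,y) = \CMinus(D)$, i.e.\ $\CPlusMinus(D_2) = \CMinusPlus(D)$. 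For part 3, reversing the orientation of the diagram reverses the Gauss code as a word (the signs on items, being over/under data, stay put), so $(x,y)$ is an upper (resp.\ lower) skew pair of $D$ if and only if $(y,x)$ is an upper (resp.\ lower) skew pair of $D_3$; and reversing the orientation negates both tangent vectors at each crossing, leaving $\sign(x)$ unchanged. Thus $(x,y)\mapsto(y,x)$ is a sign-preserving bijection between the skew pairs of $D$ and those of $D_3$, so $\CPlusMinus(D_3) = \CPlusMinus(D)$.

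For \textbf{part 4}, I would first fix a concrete model of the semigroup product (following~\cite{TuraevKnotoids}): a diagram of $K_1 \cdot K_2$ is obtained from diagrams $D_1$ of $K_1$ and $D_2$ of $K_2$ by placing them in disjoint regions and joining the end of $D_1$ to the beginning of $D_2$ by an embedded arc meeting the rest of the picture only at those two points, so that $\cross{(D_1 \cdot D_2)} = \cross{D_1} \sqcup \cross{D_2}$ with inherited signs and the Gauss code of $D_1 \cdot D_2$ is the concatenation of that of $D_1$ followed by that of $D_2$. Then a skew pair cannot have its two crossings in different factors, since all items of a first-factor crossing precede all items of a second-factor one and hence the four items $\Plus{x},\Minus{y},\Minus{x},\Plus{y}$ cannot interleave; and within a single factor the induced order on items is the original one, so $\PairPlusMinus{(D_1 \cdot D_2)} = \PairPlusMinus{D_1} \sqcup \PairPlusMinus{D_2}$ with matching signs. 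Summing gives $\CPlusMinus(D_1 \cdot D_2) = \CPlusMinus(D_1) + \CPlusMinus(D_2)$, and this passes to knotoids by Theorem~\ref{theorem:CPlusMinus}. None of these steps is a genuine obstacle; the only points demanding care are the sign bookkeeping (the recurring fact being that flipping the sign of every crossing leaves all products $\sign(x)\sign(y)$ invariant) and, in part 4, stating the product operation precisely enough that ``the Gauss codes concatenate'' is literally correct.
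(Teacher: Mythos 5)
Your proof is correct. The paper offers no proof of this lemma at all (it is dismissed as ``straight forward''), and your argument --- reducing everything to how the Gauss code and the crossing signs transform, with the recurring observation that flipping every $\sign(x)$ leaves the products $\sign(x)\sign(y)$ invariant, plus the concatenation/non-interleaving argument for the product --- is exactly the routine verification the author intends.
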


Consider three diagrams $D_0,D_1,D_2$
which form a standard Conway triple,
i.e., the diagrams coincide outside a small neighborhood  of a crossing $x$
while inside the neighborhood they look  like it is shown in
Fig.~\ref{fig:2}.

\begin{figure}[h!]
\centerline{\psfig{figure=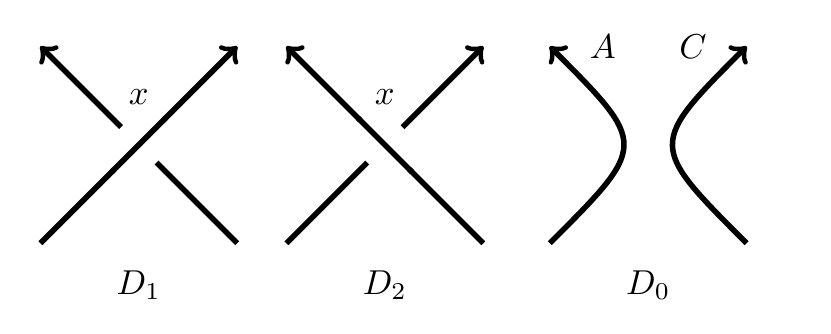}} 
\caption{A Conway triple}
\label{fig:2}
\end{figure}

Here diagrams $D_1,D_2$ are knotoid diagrams
while $D_0$ is a diagram of a multi-knotoid,
i.e., an immersion of disconnected $1$-manifold ---
the disjoint union of the segment and the circle.
Denote parts of $D_0$ by $A$ and $C$
where $A$ denotes the arc and $C$ denotes the circle.
Denote by $s_1,s_2$ the signs of $x$ in diagrams $D_1,D_2$, respectively.

Let the notation is chosen such that $\Plus{x} < \Minus{x}$ in $D_1$.
It is necessary to emphasize that in our notation $s_1$ may be equal to $-1$.

Denote by
\[
\begin{split}
\lkPlus(D_0) =s_1 \sum_{{\underset{\text{$A$ is over $C$}}{y \in A \cap C}}}\sign(y),\\
\lkMinus(D_0) =s_1 \sum_{{\underset{\text{$A$ is under $C$}}{y \in A \cap C}}}\sign(y).
\end{split}
\]

Theorem~\ref{theorem:SkeinRelation}
below states an analogue of known skein relation for the Casson knot invariant:
$C(D_1) -C(D_2) =s_1 \lk(D_0)$
(here $C$ denotes the Casson knot invariant and $\lk$ denotes the linking number).

\begin{theorem} \label{theorem:SkeinRelation}
If $D_0,D_1,D_2$ are a Conway triple as above
then
\begin{equation*}
\CPlus(D_1) -\CPlus(D_2) =s_1 \lkPlus(D_0), \quad 
\CMinus(D_1) -\CMinus(D_2) =s_1 \lkMinus(D_0).
\end{equation*}
\end{theorem}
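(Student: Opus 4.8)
The plan is to keep the diagram $D_1$ fixed, argue purely with the combinatorics of the Gauss code, and track which upper (resp.\ lower) skew pairs are created or destroyed when the crossing $x$ is switched to produce $D_2$; the difference will localize near $x$ and turn into a signed count of the $A$--$C$ crossings of $D_0$. The decisive elementary observation is that $D_1$ and $D_2$ have the \emph{same} underlying immersed arc, so their Gauss codes agree letter by letter except that the two items attached to $x$ swap their signs: reading from the beginning, $D_1$ shows $\Plus{x}$ and then, later, $\Minus{x}$ (our normalization), while $D_2$ shows $\Minus{x}$ and then $\Plus{x}$ at those very same two positions. Hence for any two crossings $p,q\neq x$ the order of $\Plus{p},\Minus{p},\Plus{q},\Minus{q}$ is unchanged, and $\sign(p),\sign(q)$ are unchanged, so $(p,q)$ is an upper (resp.\ lower) skew pair of $D_1$ exactly when it is one of $D_2$, with the same sign. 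Therefore $\CPlus(D_1)-\CPlus(D_2)$ reduces to the contribution of the upper skew pairs containing $x$, and likewise $\CMinus(D_1)-\CMinus(D_2)$ to the lower ones containing $x$.

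Next I would bring in the resolution. Write the Gauss code of $D_1$ as $[P_1]\,\Plus{x}\,[P_2]\,\Minus{x}\,[P_3]$, where $P_1,P_2,P_3$ are the portions of $D_1$ before the first passage through $x$, between the two passages, and after the second passage. The oriented smoothing at $x$ glues $P_2$ (with the orientation it inherits) into the circle $C$ of $D_0$ and glues $P_1$ to $P_3$ into the arc $A$; every crossing $y\neq x$ survives in $D_0$ with the same over/under datum and the same sign. A direct check of the defining inequalities then shows: in $D_1$ the upper skew pairs through $x$ are precisely the pairs $(x,q)$ with the item $\Minus{q}$ among the items of $P_2$ and $\Plus{q}$ among those of $P_3$, while no upper skew pair $(q,x)$ occurs (it would force $\Minus{x}<\Plus{x}$, contrary to the normalization); after the swap, in $D_2$ they are precisely the pairs $(q,x)$ with $\Plus{q}$ in $P_1$ and $\Minus{q}$ in $P_2$, and none of the form $(x,q)$. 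In every such case one item of $q$ lies in $P_2$ and the other in $P_1\cup P_3$, so $q$ is an $A$--$C$ crossing of $D_0$ whose \emph{over}-strand lies on $A$, i.e.\ $A$ is over $C$ at $q$. The same analysis for \emph{lower} skew pairs, with the roles of $\Plus{\cdot}$ and $\Minus{\cdot}$ interchanged, picks out exactly the $A$--$C$ crossings at which $A$ is \emph{under} $C$.

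It remains to collect signs. Switching a crossing reverses its sign, so $\sign(x)=s_1$ in $D_1$ and $\sign(x)=-s_1$ in $D_2$; using $\sign(x,q)=\sign(x)\sign(q)$ and that $\sign(q)$ is computed identically in $D_0,D_1,D_2$, the two surviving families contribute $s_1\sum\sign(q)$ over $\{\,q:\Minus{q}\in P_2,\ \Plus{q}\in P_3\,\}$ and $-(-s_1)\sum\sign(q)=s_1\sum\sign(q)$ over $\{\,q:\Plus{q}\in P_1,\ \Minus{q}\in P_2\,\}$. These two index sets are disjoint, and their union is exactly the set of $A$--$C$ crossings at which $A$ is over $C$; collecting the terms and unwinding the definition of $\lkPlus(D_0)$ gives $\CPlus(D_1)-\CPlus(D_2)=s_1\lkPlus(D_0)$. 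Replacing ``upper/over'' by ``lower/under'' throughout gives $\CMinus(D_1)-\CMinus(D_2)=s_1\lkMinus(D_0)$ by the identical computation.

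The one point needing care is the case analysis of the second paragraph: for each inequality pattern defining an upper or a lower skew pair involving $x$, one must determine exactly where the two items of the partner crossing can sit among $P_1,P_2,P_3$ and then read off the over/under relation of $A$ and $C$ from which item carries the ``$+$''. Everything else is routine; in particular there is no well-definedness to verify, since the statement concerns fixed diagrams. As a sanity check one may note that the signed number of $A$-over-$C$ crossings of $D_0$ is manifestly invariant under the Reidemeister moves of the multi-knotoid $D_0$.
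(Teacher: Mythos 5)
Your argument is correct and follows essentially the same route as the paper: cancel the pairs not involving $x$, classify the skew pairs through $x$ by where the partner crossing's two Gauss-code items fall relative to $\Plus{x}$ and $\Minus{x}$ (your $P_1,P_2,P_3$ decomposition is exactly the paper's four-case analysis, with $P_2$ becoming $C$ and $P_1\cup P_3$ becoming $A$), and read off whether $A$ is over or under $C$ from which item carries the overpass. One small caveat you share with the paper: both computations yield $\CPlus(D_1)-\CPlus(D_2)=s_1\sum_{A\ \text{over}\ C}\sign(y)$, which matches the stated identity only if $\lkPlus(D_0)$ is read as $\sum_{A\ \text{over}\ C}\sign(y)$ without the extra factor $s_1$ appearing in the displayed definition (otherwise one $s_1$ is redundant since $s_1^2=1$); this is a normalization wobble inherited from the source, not an error in your argument.
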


Note that unlike the classical situation in the case of knotoid the values $\lkPlus(D_0)$ and $\lkMinus(D_0)$
are not necessary equal. It may be so, for example, if $c$ is not homology trivial
or if an endpoint of the diagram lies inside the circle.
 
\begin{proof}[of Theorem~\ref{theorem:SkeinRelation}]
Note if a pair $(u,v),x \not=u,x \not=v,$
is skew then the pair is skew both in $D_1$ and $D_2$.
Thus the terms corresponding to the pair cancel out in the expression $\CPlusMinus(D_1) -\CPlusMinus(D_2)$.
Hence it is sufficient to consider pairs in which the crossing $x$ is involved.

Assume  $y$ is a self-crossing of of the arc $A$.
Then in $D_1$ we have
$\PlusMinus{y} < \Plus{x} < \Minus{x} < \MinusPlus{y}$
hence the pair $(y,x)$ is not skew both in $D_1$ and $D_2$.
If $y$ is a self-crossing of the circle $C$ then
$\Plus{x} <\PlusMinus{y} < \MinusPlus{y} < \Minus{x}$
hence the pair $(x,y)$ is not skew both in $D_1$ and $D_2$.

Assume $y \not=x$ is a crossing in which $A$ and $C$ cross each other.
It means that one of entries of $y$ places between $\Plus{x}$ and $\Minus{x}$
while the other one places either before or after the two,
i.e., there are $4$ possible situations.

{\bf 1.}  Let
$\Plus{y} <\Plus{x} <\Minus{y} <\Minus{x}$
in the diagram $D_1$. Then in $D_2$ we have
$\Plus{y} <\Minus{x} <\Minus{y} <\Plus{x}$.
Hence the pair $(y,x)$ is not skew in $D_1$ and
in $D_2$ it is upper skew pair such that $\sign(y,x)=s_2 \sign(y) =-s_1 \sign(y)$ .

{\bf 2.} Let
$\Plus{x} < \Minus{y} <\Minus{x} <\Plus{y}$ in $D_1$.
Then in $D_2$ we have
$\Minus{x} <\Minus{y} <\Plus{x} <\Plus{y}$.
Hence in $D_1$ the pair $(x,y)$ is the upper skew pair such that $\sign(x,y) =s_1 \sign(y)$
while in $D_2$ the pair is not skew.

In both cases above $y$ is such an intersection point of $A$ and $C$
in which $A$ is on top.
In two cases below $A$ is under $C$.

{\bf 3.} Let
$\Minus{y} <\Plus{x} <\Plus{y} <\Minus{x}$ in $D_1$.
then in $D_2$ we have
$\Minus{y} <\Minus{x} <\Plus{y} <\Plus{x}$.
Hence in $D_1$ the pair $(y,x)$ is the lower skew pair such that $\sign(y,x) =s_1 \sign(y)$
while in $D_2$ the pair is not skew.

{\bf 4.} Let
$\Plus{x} <\Plus{y} <\Minus{x} <\Minus{y}$ in $D_1$.
then in $D_2$ we have
$\Minus{x} <\Plus{y} <\Plus{x} <\Minus{x}$.
Hence the pair $(x,y)$ is not skew in $D_1$
while in $D_2$ it is the lower skew pair such that $\sign(x,y) =s_2 \sign(y) =-s_1 \sign(y)$.

Therefore, upper skew pairs exist in the cases $1$ and $2$ in which $A$ is over $C$
while lower skew pairs exist in the cases $3$ and $4$ in which $A$ is under $C$.
The sign of the skew pairs existing in $D_1$ is equal to $s_1$
while the sign of such a pairs in $D_2$ is equal to $s_2=-s_1$.
These observations mean that the proving equalities hold.
\end{proof}

{\bf Remark.}
The Casson knot invariant can be extended to virtual knotoids also.
The notion of virtual knotoid is introduced in~\cite{KauffmanInvariants}.
A virtual knotoid is an equivalence class of virtual knotoid diagrams.
A virtual knotoid diagram is defined analogously to usual knotoid diagrams in $S^2$
with the only difference --- crossing in the diagram are either classical (that is usual crossings)
or virtual which are not provided with over/under-crossing data.
Such a diagrams are regarded up to isotopies and extended set of Reidemeister moves
which coincides with that in virtual knot theory.
All definitions above can be extended to virtual knotoids without any changes.
Values $\CPlusMinus$ are invariants of virtual knotoids
because additional (virtual) Reidemeister moves do not effect on skew pairs.

\section{A homological extension of $\CPlusMinus$}
\label{sec:HomologicalExtention}

In this section we use the notion of a skew pair
to define another invariant of a knotoid.
Below we additionally associate with a skew pair    a subgroup of $H_1(\Sigma)$ determined by  the pair.

\subsection{The definition of $\CHPlusMinus$}
\label{sec:CHPlusMinus}

Given a knotoid diagram $D$ and $x \in \cross{D}$,
the crossing divides the diagram into two parts:
the loop which starts and ends at $x$  (we denote it by $l(x)$)
and the arc (maybe with self-crossings)
which starts at the beginning of $D$, passes through $x$ and then goes to the end of $D$.
These two part are that appear as a result of orientation agree smoothing of $x$.

Let $(x,y)$ be a skew pair.
We associate with the pair 
an ordered pair
$$(L_1(x,y)=l(x),L_2(x,y)=l(y))$$
of loops in $\Sigma$.

The subgroup of $H_1(\Sigma)$ which we associate with the pair $(x,y)$ is defined as follows
\begin{equation*} \label{eq:H(x,y)}
H(x,y) =\Angle{[L_1(x,y)], [L_2(x,y)]}
\end{equation*}
here $[\cdot]$ denotes the homology class of the corresponding loop
and $\Angle{\ldots}$ denotes the subgroup generated by specified elements.

Denote by $\ModM =\ModM(\Sigma)$ the free $\mathbb{Z}$-module freely generated
by elements of $L(H_1(\Sigma))$, 
where $L(H_1(\Sigma))$ denotes the lattice of subgroups of the group $H_1(\Sigma)$.

Set
\begin{equation} \label{eq:CHPlusMinus}
\CHPlusMinus(D) =\sum_{(x,y) \in \PairPlusMinus{D}}
\sign(x,y) H(x,y) \in \ModM.
\end{equation}

Note (cf.~\eqref{eq:CPlusMinus})
that $\CHPlusMinus$ turns into $\CPlusMinus$
if we ignore homological factors.

\begin{theorem} \label{theorem:CHPlusMinus}
If $D_1,D_2$ are diagrams of the same knotoid $K$
then 
$$\CHPlusMinus(D_1) =\CHPlusMinus(D_2) \in \ModM.$$
\end{theorem}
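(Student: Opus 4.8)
The plan is to prove invariance under each of the three Reidemeister moves (together with ambient isotopy, which is obvious since it does not affect the Gauss code, the crossing signs, or the homotopy classes of the associated loops), exactly as one proves invariance of the underlying integer-valued $\CPlusMinus$, but carrying along the homological label $H(x,y)$ at each step. The key observation making this feasible is that the loop $l(x)$ attached to a crossing $x$ is defined up to homotopy (hence its class $[l(x)] \in H_1(\Sigma)$ is well defined) and changes only locally under a Reidemeister move, so I must check that the homology classes appearing in the pairs that are created, destroyed, or altered by a move behave correctly.

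First I would dispose of $\Omega_1$: a kink introduces a crossing $x$ with $\Plus{x}$ and $\Minus{x}$ adjacent in the Gauss code, so $x$ can never be part of a skew pair (its two items are consecutive, leaving no room for the interleaving pattern $\Plus{x}<\Minus{y}<\Minus{x}<\Plus{y}$ or its lower analogue), and the loop $l(x)$ is a small null-homotopic circle, so even the classes of other crossings' loops are unchanged. Hence both terms of the sum are literally unchanged. Next, $\Omega_2$: this creates two crossings $x, y$ with items appearing as a block $\ldots \varepsilon x\, \varepsilon y \ldots \varepsilon' y\, \varepsilon' x \ldots$ (signs opposite, $\sign(x)\sign(y) = -1$); I would check, case by case on whether the two-strand move happens "over'', "under'', or between unrelated strands, that the pairs involving $x$ or $y$ come in cancelling couples — for a third crossing $z$, the pair $(z,x)$ is skew iff $(z,y)$ is skew, with the same $H(\cdot)$-label (since pushing the strand across does not change the homotopy class of $l(z)$, and $l(x)\simeq l(y)$ up to the small bigon) but opposite overall sign because $\sign(x) = -\sign(y)$; and the pair $(x,y)$ or $(y,x)$ internally is never skew. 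So the net contribution is zero and $\CHPlusMinus$ is unchanged.

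The main work, and the main obstacle, is $\Omega_3$. Here no crossings are created or destroyed; the three crossings $x_1, x_2, x_3$ persist, their signs are unchanged, but the cyclic/linear order of some of their items in the Gauss code changes, and — crucially — the loops $l(x_i)$ may change homotopy class, since a strand is swept across a crossing. I would follow the Polyak–Viro bookkeeping (the arrow-diagram formulation of Section~\ref{sec:ArrowDiagram} is the right language): enumerate the relevant configurations of the three strands (which of the three is top/middle/bottom, and the position of the triangle relative to the rest of the diagram and the endpoints), and in each configuration list the skew pairs among $\{x_1,x_2,x_3\}$ and pairs $(x_i, z)$ with an external crossing $z$ before and after the move. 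For the integer invariant the signed counts match; I must upgrade this to matching of the $\ModM$-valued sums. The point to verify is that whenever a pair $(x_i, x_j)$ is skew both before and after, the subgroup $H(x_i,x_j) = \Angle{[l(x_i)],[l(x_j)]}$ is the same before and after — this holds because the homology class $[l(x_i)] + [l(x_j)]$ picks up a contribution from the swept triangle that equals $[l(x_k)]$ up to sign (the three loops satisfy a relation coming from the fact that their symmetric difference is supported in the disk of the move, which is null-homotopic), so the local change in $[l(x_i)]$ is compensated by the local change in $[l(x_j)]$, keeping the generated subgroup fixed; and analogously for $(x_i,z)$, where $[l(z)]$ is unchanged and $[l(x_i)]$ changes by a class already present, or the pair status flips in a compensating way. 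Assembling these local identities across all $\Omega_3$ configurations is the routine-but-lengthy core of the proof, and completes the argument; Theorem~\ref{theorem:CPlusMinus} then follows by forgetting the homological labels.
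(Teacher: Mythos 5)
Your overall strategy --- checking invariance move by move while carrying the subgroup labels $H(x,y)$ --- is the paper's strategy, and your treatment of $\Omega_1$ and $\Omega_2$ matches the paper's essentially verbatim (the paper shortens the case analysis by invoking Polyak's minimal generating set of four oriented Reidemeister moves, so only one oriented version of $\Omega_2$ and one of $\Omega_3$ need to be checked; your plan to run through all oriented versions is longer but not wrong). The gap is in how you describe the $\Omega_3$ step, and as written that step would not go through. First, the premise that the loops $l(x_i)$ ``may change homology class'' under the move is false: the diagram is unchanged outside the disk $R$ of the move, so the difference between the loop before and the loop after is a $1$-cycle supported in the disk $R$, hence null-homologous; there is no change in $[l(x_i)]$ to be ``compensated'' by a change in $[l(x_j)]$. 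Second, and more seriously, the skew pairs formed by two of the three crossings of the triangle do \emph{not} persist across the move, so the clause ``whenever a pair $(x_i,x_j)$ is skew both before and after'' addresses a situation that essentially does not arise, and all the real content is pushed into the unexamined catch-all ``or the pair status flips in a compensating way.''

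What actually happens (and what the paper verifies) is a cancellation \emph{within each side} of the move: for each possible order in which the diagram traverses the disk $R$, either no skew pair occurs among the three crossings, or exactly two occur, with opposite signs and --- this is where homology enters --- with \emph{equal} associated subgroups, because the three loops satisfy a relation of the form $[l(x)]=[l(y)]+[l(z)]$ in $H_1(\Sigma)$ and hence, for instance, $\Angle{[l(z)],[l(x)]}=\Angle{[l(z)],[l(y)]}$. So the contribution of the internal pairs to the sum is zero both before and after the move, and no term-by-term matching across the move is needed (indeed none is possible: in one configuration the upper skew pairs before the move are $(z,x)$ and $(z,y)$, while after the move they are $(y',z')$ and $(x',z')$, with the roles of first and second crossing interchanged). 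You do have the key identity among the three loops in hand, but you deploy it for the wrong purpose; to complete the proof you must redirect it to show that the two coexisting internal pairs on a given side generate the same subgroup and cancel against each other. Pairs with one crossing outside $R$ are handled as you say: they persist with unchanged sign and unchanged subgroup.
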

Hence one can say about $\CHPlusMinus(K) \in \ModM$
and the value is an invariant of $K$ up to automorphism of the group $H_1(\Sigma)$.

\begin{proof}
To prove the theorem we need to check that the values $\CHPlusMinus$ are preserved under all oriented Reidemeister moves.
In~\cite{Polyak2010} Polyak proved that 
there exists a generating set of oriented Reidemeister moves consisting of $4$ moves only
(see Fig.~\ref{fig:3_4_5}):
$2$ versions of the first Reidemeister move, $1$ version of the second one
and $1$ version of the third one.
Polyak's theorem is originally proved for the case of diagrams of classical knots
but all considerations within the proof are performed inside a disk,
therefore, the theorem  without any changes can be extended to the case of knotoid diagrams.
Hence it is sufficient to check that
$\CHPlusMinus$ is preserved under these generating moves only.

\begin{figure}[h!]
\centering 
\begin{subfigure}[b] {0.387\textwidth}
\centering
\psfig{figure=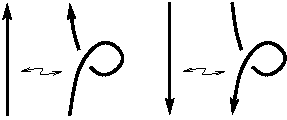, width=\textwidth}
\caption{}
\label{fig:3}                                        
\end{subfigure}%
\begin{subfigure}[b] {0.213\textwidth}
\centering
\psfig{figure=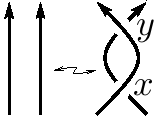, width=\textwidth}
\caption{}
\label{fig:4}
\end{subfigure}%
\begin{subfigure}[b] {0.4\textwidth}
\centering
\psfig{figure=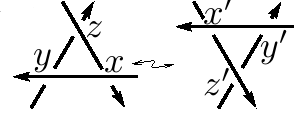, width=\textwidth}
\caption{}
\label{fig:5}
\end{subfigure}%
\caption{The generating set of oriented Reidemeister moves}
\label{fig:3_4_5}
\end{figure}

{\bf The first Reidemeister move.} (see Fig.~\ref{fig:3})
The crossing which appears/disappears as a result of the move
can not be involved in a skew (both upper and lower) pair
because passes through the crossing are placed one just after another.
Hence the move does not effect on $\CHPlusMinus$.

{\bf The second Reidemeister move.} (see Fig.~\ref{fig:4})
Denote by $x,y$ the two crossings which are vertices of the bigon appearing/disappearing as a result of the move
and let the notation is chosen such that
$\Plus{x} < \Plus{y}$ and $\Minus{x} < \Minus{y}$.
The pair $(x,y)$ is not skew
because $\Plus{y}$ is placed just after $\Plus{x}$.
The pair $(y,x)$is not skew also
because going along the diagram we meet $x$ before $y$.
Assume there is a crossing $z$ such that the pair $(x,z)$ is the upper skew pair.
Since $\Plus{y}$ and $\Minus{y}$ are placed just after $\Plus{x}$ and $\Minus{x}$, respectively,
the pair $(y,z)$ is the upper skew pair also.
The union of arcs $[\Plus{x},\Plus{y}]$ and $[\Minus{x},\Minus{y}]$ bounds a disk
hence the loop $l_j(x,z)$ is homologous to the loops $l_j(y,z)$ for $j=1,2,$
thus $H(x,z) =H(y,z) \in \ModM$.
Therefore, terms corresponding to $(x,z)$ and $(y,z)$ in the sum~\eqref{eq:CHPlusMinus} cancel out
because $\sign(x) =-\sign(y)$.
All other cases (the pair $(x,z)$ is lower or the pair $(z,x)$ is skew)
are completely analogous to the case above,
hence the move does not effect on $\CHPlusMinus$.

{\bf The third Reidemeister move.} (see Fig.~\ref{fig:5})
Denote by $R$ the disk inside which the move is performed
and by $x,y,z$, $x',y',z'=z$ the crossings involved in the move.
Let the notation is chosen such that
$$\sign(x)=\sign(x')=1,\quad \sign(y)=\sign(y')=-1, \quad \sign(z)=\sign(z')=1.$$
Assume there is a crossing $v \in \cross{D}$  lying outside the disk $R$ such that the pair $(x,v)$ is a skew pair.
Then the pair $(x',v)$ is skew also
and $\sign(x,v) =\sign(x',v)$.
Since outside $R$ the diagram does not change
the loop $l_j(x,v)$ is homologous to the loop $l_j(x',v)$ for $j=1,2$.
Hence corresponding terms in the sum~\eqref{eq:CHPlusMinus} coincide before and after the move.
The same holds for all crossings involved in the transformation
equally for upper and lower pairs.

Therefore, it remains to consider the case when both crossings in the pair lie inside $R$.
Observe that there are two different (up to cyclic permutation) orders
in which one can traverse through $R$ on the way from the beginning to the end of the diagram.
First we consider the left fragment in Fig.~\ref{fig:5}.

{\bf (I)}
$\ldots \Plus{x},\Plus{y}, \ldots \Minus{y},\Minus{z}, \ldots \Plus{z},\Minus{x} \ldots$
(see Fig.~\ref{fig:6})
Here and below ``$\ldots$'' stands for parts of the diagram placed outside the disk $R$.
In this case no two of these three crossings form a skew pair irrespective of the arc
along which one comes into the fragment for the first time.
That is because no two arrows (of three arrows under consideration) cross each other.

\begin{figure}[h!]
\centering 
       \begin{subfigure}[b]{0.3\textwidth}
                \centering
                \psfig{figure=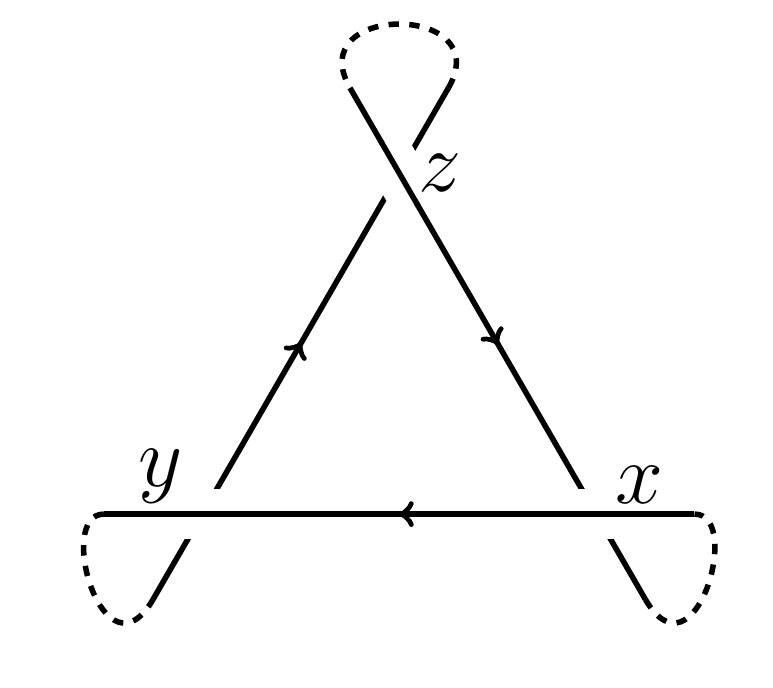, width=\textwidth}
                                        \end{subfigure}%
       \begin{subfigure}[b]{0.3\textwidth}
                \centering
                \psfig{figure=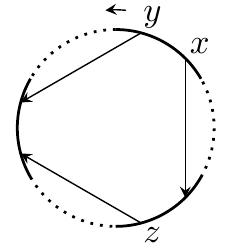, width=\textwidth}
                        \end{subfigure}%
                        \caption{The case (I)}
\label{fig:6}
\end{figure}

{\bf (II)}
$\ldots \Plus{x},\Plus{y},\ldots,\Plus{z},\Minus{x},\ldots,\Minus{y},\Minus{z},\ldots$
In Fig.~\ref{fig:7_0}
corresponding fragment of the arrow diagram  is depicted.
Now the arrows are pairwise intersecting
and the existence/non-existence of a skew pairs depends on 
from which side one comes into $R$ for the first time.

\begin{figure}[h!]
\centering 
\begin{subfigure}[b]{0.22\textwidth}
                \centering
                \psfig{figure=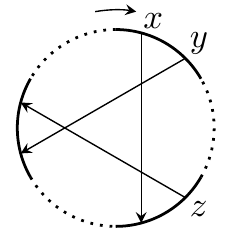, width=\textwidth}
                \caption{}
                \label{fig:7_0}
        \end{subfigure}%
              \begin{subfigure}[b]{0.22\textwidth}
                \centering
                \psfig{figure=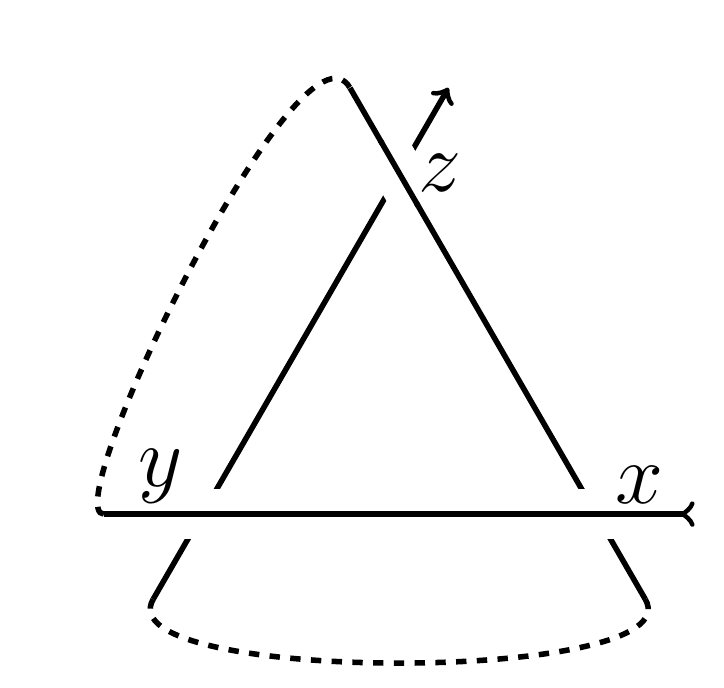, width=\textwidth}
                \caption{}
                \label{fig:7a}
        \end{subfigure}%
       \begin{subfigure}[b]{0.22\textwidth}
                \centering
                \psfig{figure=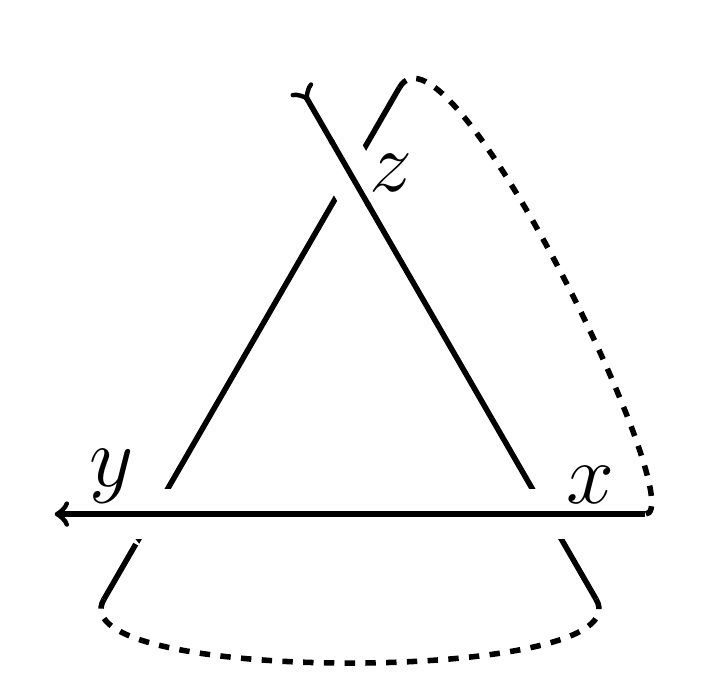, width=\textwidth}
                \caption{}
                \label{fig:7b}
        \end{subfigure}%
       \begin{subfigure}[b]{0.22\textwidth}
                \centering
                \psfig{figure=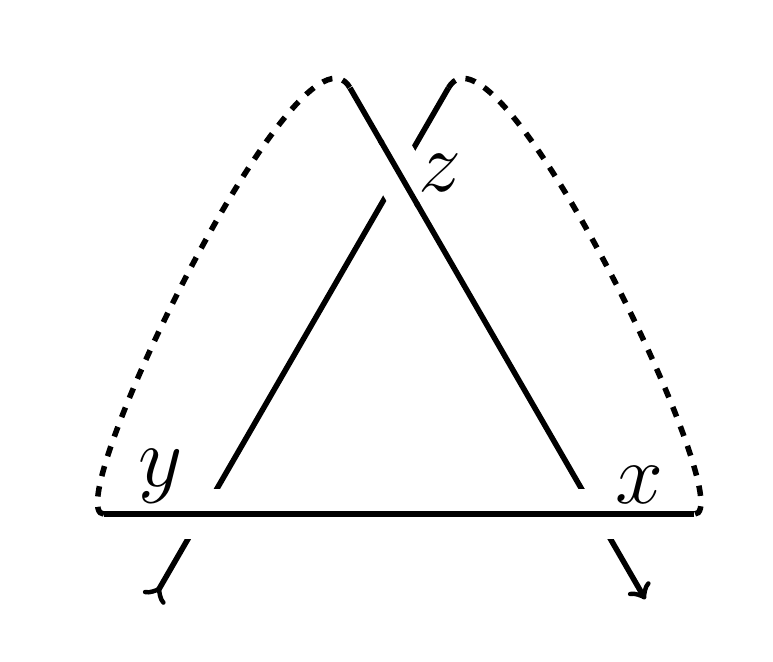, width=\textwidth}
                \caption{}
                \label{fig:7c}
        \end{subfigure}%
\caption{\small{
(b) No skew pair,
(c) $(z,x), (z,y) \in \PairPlus{D}$,
(d) $(y,x), (z,x) \in \PairMinus{D}$.
}}
\label{fig:7}
\end{figure}

If one comes into the fragment for the first time  along the arc $\Plus{x},\Plus{y}$
(see Fig.~\ref{fig:7a})
then no skew pair exists.

In the case
$\ldots,\Plus{z},\Minus{x},\ldots,\Minus{y},\Minus{z},\ldots,\Plus{x},\Plus{y},\ldots$
(see Fig.~\ref{fig:7b})
there are two upper skew pairs: $(z,x)$ and $(z,y)$
having the opposite signs: $\sign(z,x)=1,\sign(z,y)=-1$.
Check that the subgroups $H(z,x) =H(z,y)$ coincide.
The loops $L_1(z,x)$ and $L_1(z,y)$ coincide with $l(z)$
(see Fig.~\ref{fig:20_1}).
The loops $L_2(z,y)=l(y)$(see Fig.~\ref{fig:20_2}) 
and $L_2(z,x) =l(x)$ (see Fig.~\ref{fig:20_3})
do not coincide
but in $H_1(\Sigma)$ we have
$[L_2(z,x)] =[l(x)] =[l(z)] +[l(y)] =[L_1(z,y)] +[L_2(z,y)]$,
hence the subgroup in question coincide.
Therefore, the terms in the sum~\eqref{eq:CHPlusMinus}
corresponding to the pairs $(z,x)$ and $(z,y)$ cancel out.

\begin{figure}[h!]
\centering 
\begin{subfigure}[b]{0.3\textwidth}
                \centering
                \psfig{figure=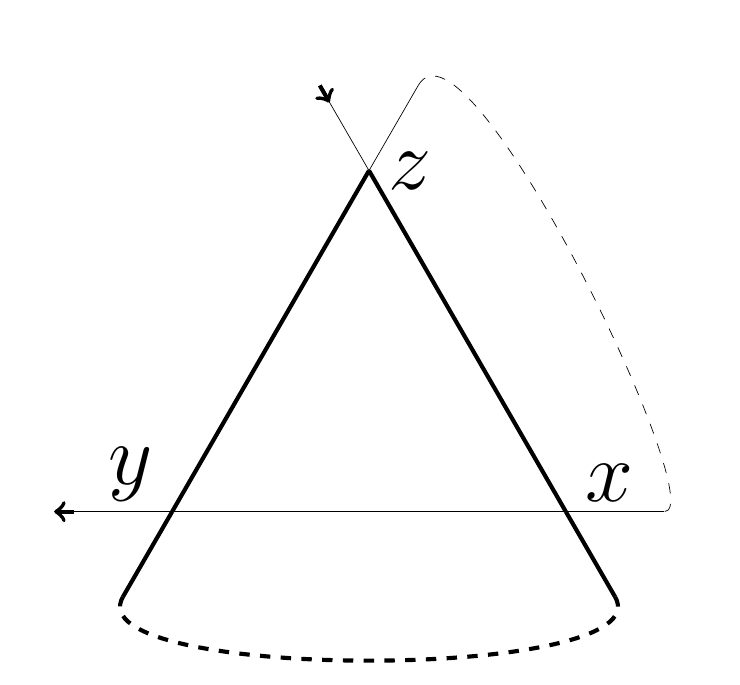, width=\textwidth}
                \caption{$l(z)$}
                \label{fig:20_1}
        \end{subfigure}%
              \begin{subfigure}[b]{0.3\textwidth}
                \centering
                \psfig{figure=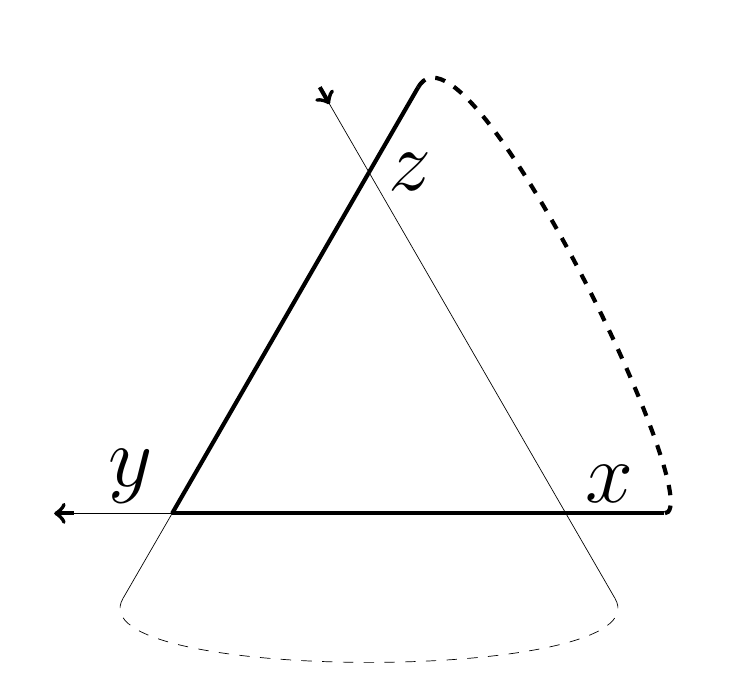, width=\textwidth}
                \caption{$l(y)$}
                \label{fig:20_2}
        \end{subfigure}%
       \begin{subfigure}[b]{0.3\textwidth}
                \centering
                \psfig{figure=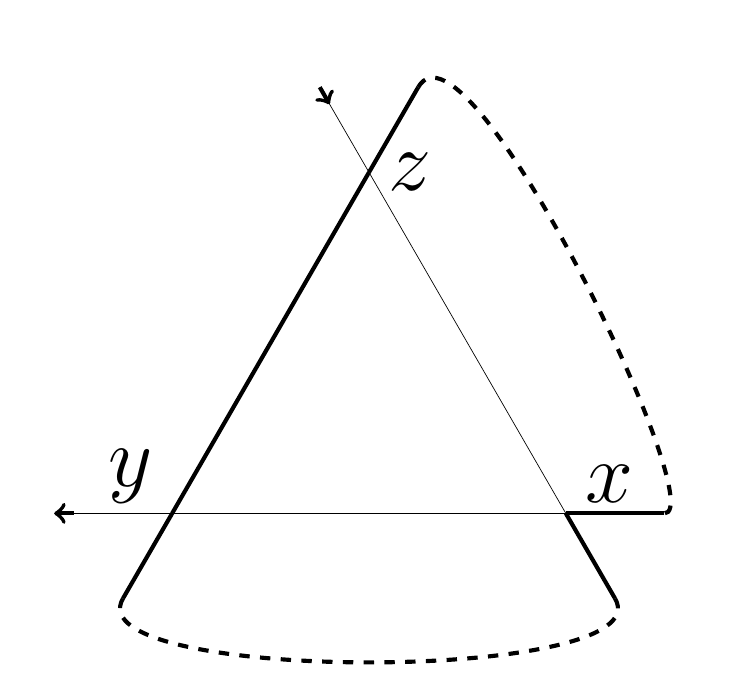, width=\textwidth}
                \caption{$l(x)$}
                \label{fig:20_3}
        \end{subfigure}%
                                                                               \caption{$[l(x)] =[l(y)] +[l(z)]$}
\label{fig:20}
\end{figure}

In the case
$\ldots,\Minus{y},\Minus{z},\ldots,\Plus{x},\Plus{y},\ldots,\Plus{z},\Minus{x},\ldots$
(see Fig.~\ref{fig:7c})
there are two lower skew pairs $(y,x)$ and $(z,x)$ again having the opposite signs:
$\sign(y,x)=-1,\sign(z,x)=1$.
(Now we do not draw loops in question
because this situation and two situations below is like to the one above.)
The loops $l_2(y,x)$ and $l_2(z,x)$ coincide with $l(x)$
and $[L_1(z,x)] =[l(z)] =[l(y)] +[l(x)] =[L_1(y,x)] +[L_2(y,x)] \in H_1(\Sigma)$.
Hence $H(y,x) =H(z,x) \in \ModM$
thus corresponding terms in the sum~\eqref{eq:CHPlusMinus}
cancel out.

Now we consider the right fragment in Fig.~\ref{fig:5}.

{\bf (III)}
(see Fig.~\ref{fig:8}).

\begin{figure}[h!]
\centering 
\begin{subfigure}[t]{0.22\textwidth}
                \centering
                \psfig{figure=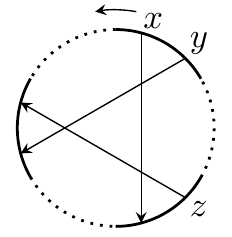, width=\textwidth}
                \caption{}
                \label{fig:8_0}
        \end{subfigure}%
              \begin{subfigure}[t]{0.22\textwidth}
                \centering
                \psfig{figure=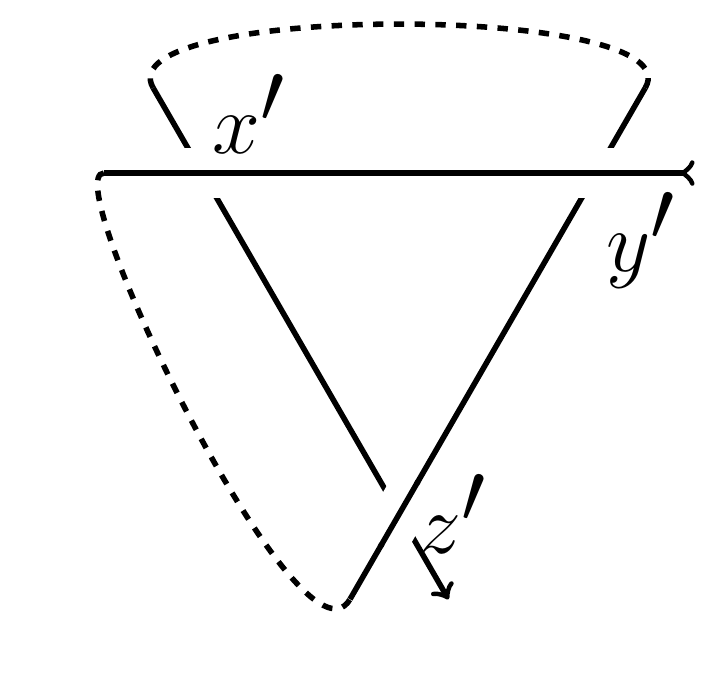, width=\textwidth}
                \caption{}
                \label{fig:8a}
        \end{subfigure}%
       \begin{subfigure}[t]{0.22\textwidth}
                \centering
                \psfig{figure=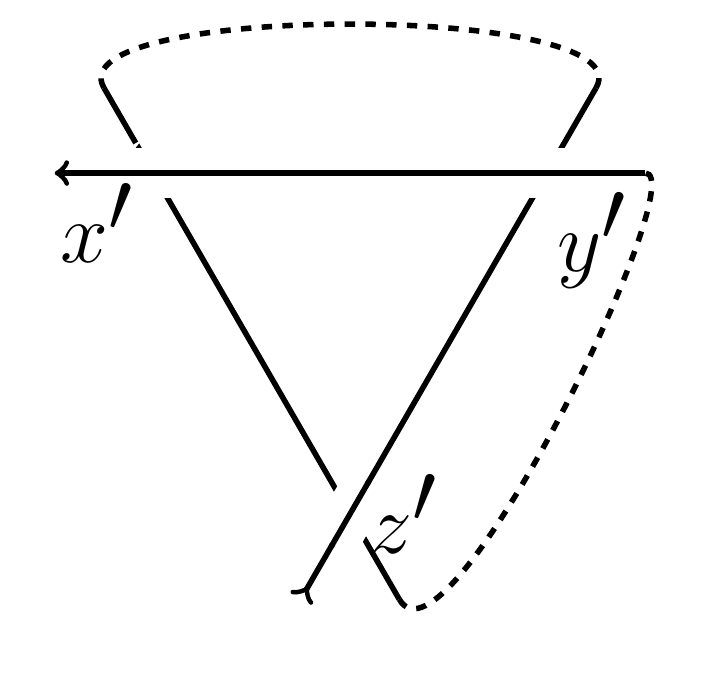, width=\textwidth}
                \caption{}
                \label{fig:8b}
        \end{subfigure}%
       \begin{subfigure}[t]{0.22\textwidth}
                \centering
                \psfig{figure=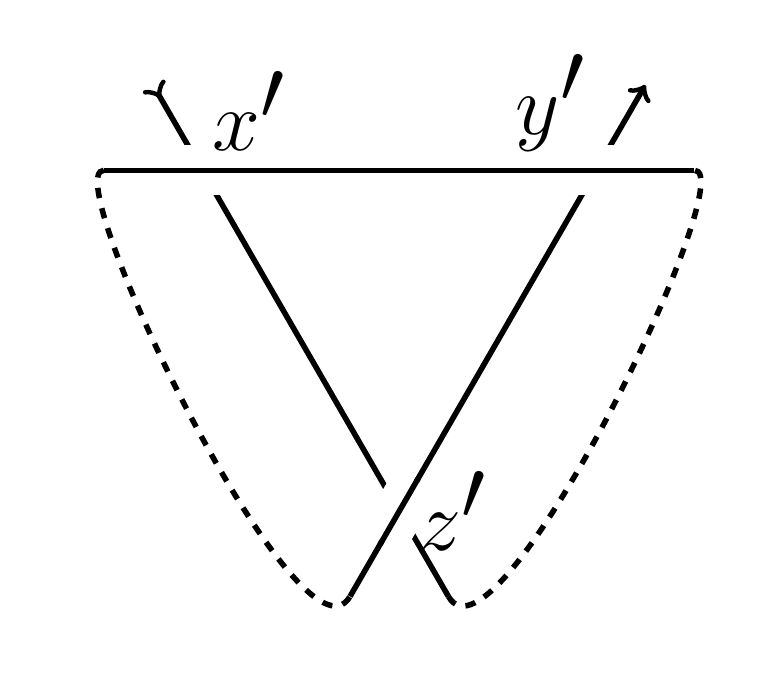, width=\textwidth}
                \caption{}
                \label{fig:8c}
        \end{subfigure}%
\caption{\small{
(b) $(y',z'), (x',z') \in \PairPlus{D}$,
(c) No skew pair,
(d) $(x',y'), (x',z') \in \PairMinus{D}$.
}}
\label{fig:8}
\end{figure}

The existence/non-existence of skew pairs depends on 
from which side one comes into $R$ for the first time.

$\ldots,\Plus{y'},\Plus{x'},\ldots,\Minus{z'},\Minus{y'},\ldots,\Minus{x'},\Plus{z'},\ldots$
(see Fig.~\ref{fig:8a}).
In the specified order there are two upper skew pairs: $(y',z')$ and $(x',z')$
having the opposite signs: $\sign(y',z')=-1,\sign(x',z')=1$.
The loops $l_2(y',z')$ and $l_2(x',z')$ coincide with $l(z')$
and
$[L_1(x',z')] =[l(x')] =[l(y')] +[l(z')] =[L_1(y',z')] +[L_2(y',z')] \in H_1(\Sigma)$.
Hence $H(x',z') =H(y',z') \in \ModM$
and thus corresponding terms in the sum~\eqref{eq:CHPlusMinus}
cancel out.

In the case
$\ldots,\Minus{z'},\Minus{y'},\ldots,\Minus{x'},\Plus{z'},\ldots,\Plus{y'},\Plus{x'},\ldots$
(see Fig.~\ref{fig:8b})
there are no skew pairs.

In the case
$\ldots,\Minus{x'},\Plus{z'},\ldots,\Plus{y'},\Plus{x'},\ldots,\Minus{z'},\Minus{y'},\ldots$
(see Fig.~\ref{fig:8c})
there two lower skew pairs: $(x',z')$ and $(x',y')$
having the opposite signs: $\sign(x',z')=1,\sign(x',y')=-1$.
The loops $L_1(x',z')$ and $L_1(x',y')$ coincide with $l(x')$
and
$[L_2(x',z')] =[l(z')] =[l(x')] +[l(y')] =[L_1(x',y')] +[L_2(x',y')] \in H_1(\Sigma)$,
hence $H(x',z') =H(x',y') \in \ModM$
 thus corresponding terms in the sum~\eqref{eq:CHPlusMinus}
cancel out.

{\bf (IV)}
$\ldots,\Plus{y'},\Plus{x'},\ldots,\Minus{x'},\Plus{z'},\ldots,\Minus{z'},\Minus{y'},\ldots$
(see Fig.~\ref{fig:9}).

\begin{figure}[h!]
\centering 
       \begin{subfigure}[b]{0.3\textwidth}
                \centering
                \psfig{figure=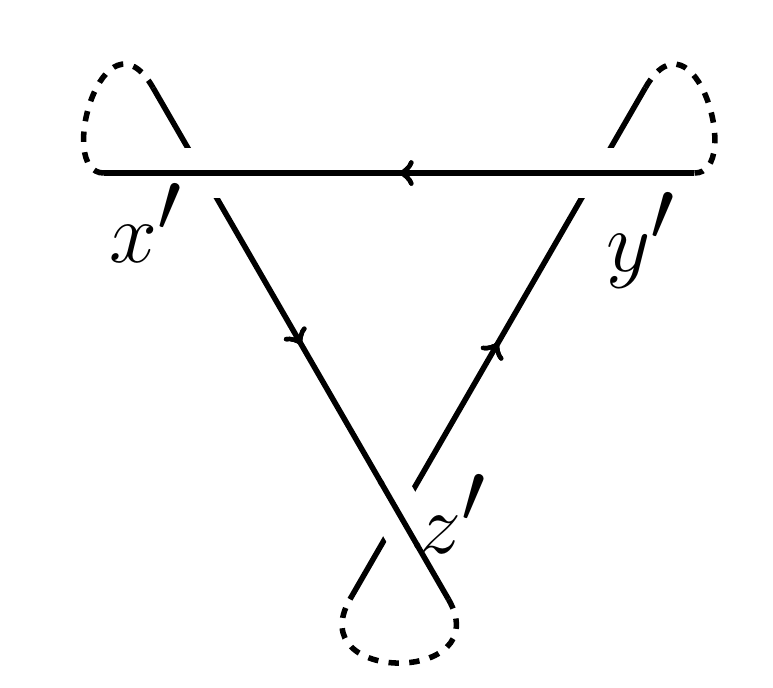, width=\textwidth}
                                        \end{subfigure}%
       \begin{subfigure}[b]{0.3\textwidth}
                \centering
                \psfig{figure=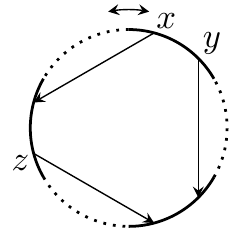, width=\textwidth}
                        \end{subfigure}%
                        \caption{The case (IV)}
\label{fig:9}
\end{figure}

In this case as in the case (I) 
there are no skew pairs irrespective of the side
from which one comes into the fragment for the first time.

Therefore, in all cases above either there are no skew pairs
or two  skew pairs exists
but corresponding terms in the sum~\eqref{eq:CHPlusMinus} cancel out.
Therefore, in all possible situations the third Reidemeister move does not change the values $\CHPlusMinus$.

This completes the proof of Theorem~\ref{theorem:CHPlusMinus}.
\end{proof}

\subsection{A lower bound of the crossing number}
\label{sec:CrossingNumber}

Using $\CHPlusMinus$ we can obtain
a lower bound of the crossing number of a knotoid.
Recall the crossing number of a knotoid $K$ (we denote it by $\crn(K)$)
is defined to be the minimum of crossings over all representative diagrams.
The statement below is an analogue of~\cite[Theorem 1.E]{PolyakViro2001}
which gives a sharp  lower bound of the crossing number of a classical knot by its Casson knot invariant.
But it is necessary to emphasize that
the sharp bound in the case of knotoids is twice the sharp bound in the classical case.

Denote by $\|q\|,q \in \ModM,$ a standard norm of the vector $q$,
namely, if $q =\sum_j q_j H_j,q_j \in \mathbb{Z},H_j \in L(H_1(\Sigma)),$
then
$$\|q\| =\sum_j|q_j|$$
(the sum on the right-hand side is finite by definition of $\ModM$).

\begin{theorem} \label{theorem:crn}
{\bf 1}. For any knotoid $K$
\begin{equation} \label{eq:crn}
\|\CHPlus(K)\| +\|\CHMinus(K)\| \leq [\frac{(\crn(K))^2}4]
\end{equation}
 
{\bf 2}. There exists an infinite family of knotoids in $S^2$
for which the inequality ~\eqref{eq:crn} becomes equality.
\end{theorem}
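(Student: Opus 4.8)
The plan is to prove part~1 by a direct counting argument and part~2 by exhibiting an explicit family of diagrams that realizes the bound.

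For part~1, fix a diagram $D$ of $K$ with $n = \crn(K)$ crossings. Every term contributing to $\CHPlus(D)$ or $\CHMinus(D)$ is indexed by a skew pair $(x,y) \in \cross{D}\times\cross{D}$, and each such term has weight $\sign(x,y)=\pm 1$. Hence, writing $P = |\PairPlus{D}|$ and $N = |\PairMinus{D}|$ for the numbers of upper and lower skew pairs, the triangle inequality gives $\|\CHPlus(D)\| \le P$ and $\|\CHMinus(D)\| \le N$, so it suffices to bound $P + N$. The key observation is that an upper skew pair and a lower skew pair impose incompatible orderings of the four passes through $\{x,y\}$ in the Gauss code: for an upper pair $(x,y)$ we have $\Plus{x}<\Minus{y}<\Minus{x}<\Plus{y}$, while for a lower pair (in either order) the two overpasses are not interleaved with the two underpasses in this alternating fashion. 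I would make this precise by showing that for any unordered pair of distinct crossings $\{x,y\}$, at most one of the four ordered configurations $(x,y)$ or $(y,x)$ being upper-skew or lower-skew can occur; equivalently, each unordered pair $\{x,y\}$ contributes at most $1$ to $P+N$. This already gives $P+N \le \binom{n}{2}$, which is weaker than $[n^2/4]$ only for small $n$ and stronger than needed for large $n$ — so I need a sharper combinatorial input.

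The sharper input is the following: in the Gauss code of $D$ (a word of length $2n$ in signed letters, each letter appearing once with $+$ and once with $-$), a skew pair $\{x,y\}$ requires the four passes to appear in the cyclic-free linear order $x\,y\,x\,y$ (as letters, ignoring signs), i.e. the two chords $x$ and $y$ must be \emph{linked} in the classical chord-diagram sense, AND the signs must alternate in the specific skew pattern. The number of linked pairs of chords in a chord diagram on $2n$ points on a line is classically at most $[n^2/4]$ — this is the extremal count achieved by the "half over, half under" interleaving, and it is exactly the bound appearing in~\cite[Theorem~1.E]{PolyakViro2001} before the factor-of-two discrepancy. Actually, in the classical knot case Polyak--Viro get $[n^2/8]$ because each linked pair can be skew in essentially one way out of two sign-patterns that are forced to pair up; in the knotoid case, because the Gauss code is a genuinely linear (not cyclic) word, an unordered linked pair can simultaneously host an upper skew pair and contribute to the lower count only through the \emph{other} cyclic rotation, which is now unavailable — I would argue that each linked pair contributes at most $1$ total to $P+N$ but that there is no further "halving", so the bound doubles to $[n^2/4]$. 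I would then invoke (or reprove in half a line) the elementary fact that a linear chord diagram on $n$ chords has at most $[n^2/4]$ linked pairs, e.g. by noting that the linking graph is a comparability-type graph whose edge count is maximized by the balanced interleaved pattern.

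For part~2, I would construct the family explicitly: take the knotoid diagram in the annulus whose Gauss code is the interleaved word $1^{+}2^{+}\cdots k^{+}\,1^{-}2^{-}\cdots k^{-}$ (realized as $k$ strands wrapping once around the core of the annulus and crossing transversally, a "torus-braid-like" knotoid), for $k = 1,2,3,\dots$. For this diagram every one of the $[k^2/4]$... — more carefully, I would choose the over/under assignment and the winding so that (i) every linked pair is a skew pair of a definite type, (ii) the associated homology classes $[l(x)], [l(y)] \in H_1(\text{annulus}) \cong \mathbb{Z}$ are arranged so that the subgroups $H(x,y)$ are pairwise distinct across the surviving pairs (e.g. the pair $(x,y)$ records the subgroup $\langle d(x), d(y)\rangle$ where $d(\cdot)$ is the winding degree of the loop, and distinct pairs give distinct generating data), and (iii) no two terms cancel. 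Then $\|\CHPlus\| + \|\CHMinus\|$ equals the total number of skew pairs, which equals the maximal linked-pair count $[n^2/4]$ with $n = \crn$, provided we also check $\crn(K) = n$ for these diagrams — this follows since $\|\CHPlus(K)\| + \|\CHMinus(K)\| = [n^2/4]$ together with part~1 forces $\crn(K) \ge n$, and the diagram has $n$ crossings, so $\crn(K) = n$.

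The main obstacle I anticipate is part~2, specifically arranging the homological data so that there is genuinely \emph{no cancellation}: in the pure integer invariant $\CPlusMinus$ many skew-pair contributions cancel in sign, which is precisely why the homological refinement is needed to saturate the bound. The delicate point is to verify that the subgroups $H(x,y)$ attached to oppositely-signed skew pairs over the same index set are distinct elements of the lattice $L(H_1(\Sigma))$, so that they do not combine in $\ModM$; this requires a careful bookkeeping of which loop $l(x)$ arises from the orientation-agreeing smoothing at each crossing of the chosen wrapped diagram, and computing its winding number around the annulus core. I expect this to reduce to a clean linear-algebra statement over $\mathbb{Z}$ once the diagram is set up symmetrically, but it is where the real content of the sharpness claim lies.
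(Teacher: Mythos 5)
Your overall skeleton (bound each norm by the number of skew pairs of the corresponding type, then bound the total number of skew pairs by $[n^2/4]$; exhibit an explicit family for sharpness) is the same as the paper's, but both of your key steps contain genuine errors.

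For part~1, your observation that each unordered pair $\{x,y\}$ contributes at most one skew pair is correct (the four possible skew configurations are four distinct total orders on $\Plus{x},\Minus{x},\Plus{y},\Minus{y}$), but the resulting bound $P+N\le\binom{n}{2}$ is \emph{weaker} than $[n^2/4]$ for every $n\ge 3$, not just for small $n$. The ``sharper combinatorial input'' you then invoke is false: a linear chord diagram on $n$ chords can have as many as $\binom{n}{2}$ linked pairs --- in the fully interleaved word $x_1x_2\cdots x_n\,x_1x_2\cdots x_n$ every pair of chords is linked --- so linkedness alone can never yield $[n^2/4]$. The mechanism that actually works, and is what the paper uses, is the sign pattern you mention in passing but never exploit: in an upper skew pair $(x,y)$ one has $\Plus{x}<\Minus{x}$ but $\Minus{y}<\Plus{y}$, and in a lower skew pair the roles are reversed, so \emph{every} skew pair joins an ``over-first'' crossing to an ``under-first'' crossing. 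Splitting $\cross{D}$ into $C_+=\{x:\Plus{x}<\Minus{x}\}$ and $C_-=\{x:\Minus{x}<\Plus{x}\}$ with $|C_\pm|=n_\pm$, the skew pairs live inside a bipartite product, whence $P+N\le n_+n_-\le[n^2/4]$.

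For part~2, the family you actually write down, with Gauss code $\Plus{x_1}\cdots\Plus{x_k}\,\Minus{x_1}\cdots\Minus{x_k}$, has \emph{no} skew pairs at all: every crossing is over-first, so $C_-=\emptyset$ and, by the bipartite observation above, $P+N=0$. To saturate the bound one must alternate over- and under-first crossings, as the paper does with the code $\Plus{x_1},\Minus{x_2},\ldots,\Plus{x_{2j-1}},\Minus{x_{2j}},\Minus{x_1},\Plus{x_2},\ldots,\Minus{x_{2j-1}},\Plus{x_{2j}}$ and all crossings of sign $+1$; a direct count gives $\frac{j(j+1)}2$ upper and $\frac{j(j-1)}2$ lower skew pairs, totalling $j^2=[(2j)^2/4]$. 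Your anticipated difficulty about arranging the homology subgroups so that ``no two terms cancel'' is a red herring: once all crossings have sign $+1$, every term in the sums defining $\CHPlus$ and $\CHMinus$ enters with coefficient $+1$, so no cancellation can occur in $\ModM$ regardless of which subgroups $H(x,y)$ arise, and $\|\CHPlus\|+\|\CHMinus\|$ equals the total number of skew pairs automatically. (Your closing argument that part~1 then forces $\crn(K)=2j$ for these diagrams is fine.)
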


\begin{proof}
{\bf 1}. 
Given a diagram $D$ of a knotoid $K$
divide the set of its crossings into two  subsets:
\[
\begin{split}
C_{+} &=\{ x \in \cross{D}: \Plus{x} < \Minus{x} \},\\
C_{-} &=\{ x \in \cross{D}: \Minus{x} < \Plus{x} \}.
\end{split}
\]
Denote $n_{\pm}=|C_{\pm}|$
(here $|\cdot|$ denotes the cardinality of the specified set).
Then $n_{+} +n_{-} =n$ where $n =|\cross{D}|$.
Each skew pair of crossings (both upper and lower) consists of two crossings
such that one of them is an element of $C_{+}$
while the other one is an element of $C_{-}$.
Hence the total number of skew pairs
$$|\PairPlus{D}| +|\PairMinus{D}| \leq n_{+} n_{-} =n_{+} (n -n_{+}).$$
Finally note that $\|\CHPlus(K)\| \leq |\PairPlus{D}|$,
$\|\CHMinus(K)\| \leq |\PairMinus{D}|$
and $n_{+} (n -n_{+}) \leq \frac{n^2}4$.
This completes the proof of inequality~\eqref{eq:crn}.

{\bf 2.}
Consider following family of knotoids
$K_j \subset S^2,j=1,2,\ldots$
given by diagrams $D_j$ with Gauss codes:
$$D_j \to \Plus{x_1}, \Minus{x_2},\ldots,\Plus{x_{2j-1}}, \Minus{x_{2j}},\Minus{x_1}, \Plus{x_2},\ldots, \Minus{x_{2j-1}}, \Plus{x_{2j}},$$
and $\sign(x_k)=1,k=1,\ldots,2j$.
The knotoid $2_1$ considered below in Section~\ref{sec:K2_1}
(see Fig.~\ref{fig:K2_1})
is the first knotoid in the family.
The diagram $D_2$ is drawn in Fig.~\ref{fig:K4_1}.
These two figures suggest the way of drawing $D_j$ for any $j \geq 1,$ in $S^2$.
Hence all knotoids in the family are indeed spherical knotoids.

\begin{figure}[h!]
\centering
\psfig{figure=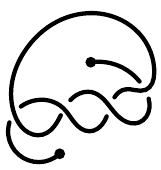}
\caption{The diagram $D_2$}
\label{fig:K4_1}
\end{figure}

Clearly, $n_j=\crn(K_j) \leq 2j$ for any $j \geq 1$.

The set $\PairPlus{D_j}$ consists of following pairs:
\[
\begin{array}{ccccc}
(x_1,x_2), & (x_1,x_4),& \ldots,&(x_1,x_{2j-2}),&(x_1,x_{2j}),\\
&(x_3,x_4),&\ldots,&(x_3,x_{2j-2}),&(x_3,x_{2j}),\\ 
&&\ldots&\ldots&\ldots \\
&&&(x_{2j-3},x_{2j-2}),&(x_{2j-3},x_{2j}),\\
&&&&(x_{2j-1},x_{2j}).
\end{array}
\]
Hence
$$|\PairPlus{D_j}| =j +(j-1) +(j-2) +\ldots +1 =\frac{j(j+1)}2.$$
Similarly, the set $\PairMinus{D_j}$ consists of
\[
\begin{array}{ccccc}
(x_2,x_3), & (x_2,x_5),& \ldots,&(x_2,x_{2j-3}),&(x_2,x_{2j-1}),\\
&(x_4,x_5),&\ldots,&(x_4,x_{2j-3}),&(x_4,x_{2j-1}),\\ 
&&\ldots&\ldots&\ldots \\
&&&(x_{2j-4},x_{2j-3}),&(x_{2j-4},x_{2j-1}),\\
&&&&(x_{2j-2},x_{2j-1}).
\end{array}
\]
Hence
$$|\PairMinus{D_j}| =(j-1) +(j-2) +\ldots +1 =\frac{j(j-1)}2.$$
Thus
$$|\PairPlus{D_j}| +|\PairMinus{D_j}| =\frac{j(j+1)}2 +\frac{j(j-1)}2 =j^2.$$
It remains to note that $j=\frac{n_j}2$ and 
that since all crossings in the diagram are positive then
$|\PairPlus{D_j}| =\| \CHPlus(K)\|$,
$|\PairMinus{D_j}| =\|\CHMinus(K)\|$.
	\end{proof}

{\bf Remarks}

1. 
The family of knotoids used above in the proof of the second part of Theorem~\ref{theorem:crn}
consists of spherical knotoids having even crossing number.
We know examples of knotoids having odd crossing number for which the inequality~\eqref{eq:crn}
becomes equality but the knotoids are not spherical.
So we do not know whether the estimate~\eqref{eq:crn} is sharp
for spherical knotoids having odd crossing number.
The table in Section~\ref{sec:Table} 
suggests the conjecture that in this case the bound should be decresed by $1$
(see Remark~5 in Section~\ref{sec:Table}).

2. The proof of the first part of Theorem~\ref{theorem:crn}
 follows proof of~\cite[Theorem 1.E]{PolyakViro2001}
until the last step
in which the difference between the Casson knot invariant and its analogue for knotoids becomes apparent.
In our case $\CHPlus$ and $\CHMinus$ (similarly $\CPlus$ and $\CMinus$)
are two different invariants
while in the classical  case the sum over the upper skew pairs coincides
with the sum over the lower skew pairs
and thus for classical knot $\CPlus +\CMinus =2C$.
That is because for knotoid we have $\frac{n^2}4$ while for classical knot the bound is $\frac{n^2}8$.

3. Theorem~\ref{theorem:Knot-type} shows that the difference $\CPlus -\CMinus$
keeps an useful information about a knotoid.

\subsection{$\CHPlusMinus$ of knotoids in $S^2$}
\label{sec:InS^2}

The first homology group of the sphere $S^2$ is trivial,
however, $\CHPlusMinus$ can be useful in the case of  classical knotoids also.
To this end we use a standard trick:
we transform a knotoid in $S^2$ into a knotoid in the annulus $A$.
To realize the transformation we take a diagram of the knotoid under consideration
and remove from $S^2$ two small open disks
around the endpoints of the diagram.
Resulting diagram is a knotoid diagram in $A$
whose endpoints lie on distinct connected components of $\partial A$.
It is easy to check that
as a result of such transformation the original equivalence relation on the set of knotoid diagrams in $S^2$
comes into the equivalence relation on the set of knotoid diagrams in $A$.
Since $H_1(A)$ is isomorphic to $\mathbb{Z}$
the basis of the module $\ModM(A)$ is infinite
(recall the basis consists of subgroups of the first homology group of corresponding surface)
hence $\CHPlusMinus$ may be more informative than $\CPlusMinus$.
Examples considered in Section~\ref{sec:Example}
and the table in Section~\ref{sec:Table}
show that it is indeed so.

An element of the module $\ModM(A)$
(and thus values of $\CHPlusMinus$)
in this case can be given by following sums:
$$\sum_{j=0}^{\infty} c_j \Angle{j},$$
where $c_j \in \mathbb{Z}$
and $\Angle{j}$ stands for the subgroup of $\mathbb{Z}$ generated by $j$.
In particular, $\Angle{1}$ stands for $\mathbb{Z}$ and $\Angle{0}$ stands for its trivial subgroup.

Now we prove a simple sufficient conditions for a knotoid in $S^2$
to be a knot-type knotoid.
Recall a knotoid $K \subset S^2$ is called a \emph{knot-type knotoid}
if $K$ admits a diagram $D$
for which there exists a simple arc $s \subset S^2$ 
whose endpoints coincide with the endpoints of $D$
and such that $\Int s \cap D =\emptyset$
where $\Int s$ denotes the interior of $s$.
If a knotoid is not a knot-type then
it is called a \emph{proper knotoid}
(or a \emph{pure knotoid} with respect  to Turaev's terminology in~\cite{TuraevKnotoids}).

\begin{theorem} \label{theorem:Knot-type}
Let $K$ be a spherical knotoid
for which at least one of following conditions holds:

{\bf (i)}
$\CPlus(K) \not=\CMinus(K)$,

{\bf (ii)}
$\CHPlusMinus(K) \not=\CPlusMinus(K) \Angle{0},$	

then the knotoid $K$ is a proper knotoid.
\end{theorem}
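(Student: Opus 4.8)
The plan is to argue by contraposition: assume $K$ is not a proper knotoid, i.e. $K$ is a knot-type knotoid, and deduce that $\CPlus(K)=\CMinus(K)$ and $\CHPlusMinus(K)=\CPlusMinus(K)\Angle{0}$, which is exactly the negation of conditions (i) and (ii). Since $\CPlusMinus$ and $\CHPlusMinus$ are knotoid invariants (Theorems~\ref{theorem:CPlusMinus} and~\ref{theorem:CHPlusMinus}), it suffices to produce one convenient diagram of $K$ and to evaluate the invariants on it. The key preliminary step is a normalization. By definition a knot-type knotoid $K$ admits a diagram $D\subset S^2$ together with a simple arc $s$ with $\partial s=\partial D$ and $\Int s\cap D=\emptyset$; the connected set $\Int s$ is disjoint from the connected $1$-complex $D$, hence lies in a single complementary region $R_0$ of $D$, so both endpoints of $D$ already lie on $\partial R_0$. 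Passing to the annulus $A$ as in Section~\ref{sec:InS^2} by deleting small disks around the endpoints, the arc $s$ becomes a properly embedded arc joining the two boundary circles of $A$, still meeting $D$ only at the endpoints, so $A\setminus s$ is simply connected. Equivalently, after an isotopy carrying $R_0$ to the outer region, $D$ together with a simple arc in $R_0$ joining its endpoints is a diagram of the classical knot $\widehat K$ underlying $K$ (Turaev's closure, see~\cite{TuraevKnotoids}), and the linearly ordered Gauss code of $D$ is exactly the based Gauss code of that knot diagram.

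Condition (ii) is then immediate. For $x\in\cross{D}$ the loop $l(x)$ is the portion of $D$ that begins and ends at $x$, so it avoids the endpoints of $D$ and therefore lies in $A\setminus s$; since $A\setminus s$ is simply connected, $[l(x)]=0$ in $H_1(A)$. Hence $H(x,y)=\Angle{[l(x)],[l(y)]}=\Angle{0}$ for every skew pair $(x,y)$, and
$$\CHPlusMinus(D)=\sum_{(x,y)\in\PairPlusMinus{D}}\sign(x,y)\,\Angle{0}=\CPlusMinus(D)\,\Angle{0},$$
contradicting (ii).

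For condition (i), the normalization identifies the Gauss code of the knotoid $D$ with the based Gauss code of a diagram of the classical knot $\widehat K$ (the base point lying in $R_0$). Therefore $\CPlus(D)$ and $\CMinus(D)$ are precisely the two skew-pair Gauss-diagram sums --- associated with the configurations of Figs.~\ref{fig:1b} and~\ref{fig:1c} --- evaluated on a diagram of a classical knot, and these coincide in the classical case by \cite[Corollary~1.C]{PolyakViro2001}; hence $\CPlus(D)=\CMinus(D)$, so $\CPlus(K)=\CMinus(K)$, contradicting (i). This completes the contrapositive. The step I expect to require the most care is this last reduction: one must check that a knot-type knotoid genuinely admits a diagram whose two endpoints lie on the boundary of one and the same complementary region, so that the diagram really is a based diagram of a classical knot and the classical equality $\CPlus=\CMinus$ becomes applicable --- this is exactly the feature that fails for a proper knotoid and lets the two skew-pair sums differ there. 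Part (ii), by contrast, needs nothing beyond confining the diagram to a disk so that every loop $l(x)$ is null-homologous in $A$.
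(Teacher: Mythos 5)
Your proposal is correct and follows essentially the same route as the paper: close the knot-type diagram $D$ up with the arc $s$ to obtain a classical knot diagram based at the beginning of $D$, identify the skew pairs of $D$ with those of $D\cup s$, and invoke the classical equality $\CPlus=\CMinus$ from \cite{PolyakViro2001} for (i); for (ii), observe that each loop $l(x)$ cannot meet $s$ and is therefore null-homologous in the annulus, forcing every $H(x,y)$ to be $\Angle{0}$. Your extra normalization step (that $\Int s$ lies in a single complementary region and that $A\setminus s$ is simply connected) just makes explicit what the paper leaves implicit.
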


\begin{proof}
Assume the contrary, i.e., that $K$ is a knot-type knotoid.
Let $D$ and $s$ be a diagram of $K$ and the arc
from the definition of a knot-type knotoid.
The union $D \cup s$ can be regarded as a classical knot diagram.
Then $\CPlus(D \cup s) =\CMinus(D \cup s)$
and the value do not depend on the choice of a base point.
So we can place the base point into the beginning of the knotoid diagram $D$.
Then a pair of crossings is skew with respect to  the classical knot diagram $D \cup s$
if and only if the pair is skew with respect to the knotoid diagram $D$.
Hence $\CPlusMinus(D) =\CPlusMinus(D \cup s) =\CMinusPlus(D)$
contradicting the condition {\bf (i)}.

To prove {\bf (ii)} assume 
that the value $\CHPlus(D)$ (or $\CHMinus(D)$) contains a term with non-trivial subgroup of $H_1(A)$.
Such a term can appear if there is a crossing $x \in \cross{D}$
for which $l(x)$ is not homology trivial.
But that is impossible because
such a loop should at least once go along the axial line of the annulus
and thus intersect the arc $s$
contradicting the condition $D \cap \Int s =\emptyset$ from the definition of a knot-type knotoid.
Thus the subgroup $H(x,y)$ is trivial for a skew pair $(x,y)$ (if any).
Hence all terms in the sum $\CHPlusMinus(D)$ are given in the form: the sign of corresponding skew pair multiplying to $\Angle{0}$
and the resulting value is equal to $\CPlusMinus(D) \Angle{0}$.
\end{proof}

{\bf Remarks.}

1. The conditions are quite effective
(see Remark~4 in Section~\ref{sec:Table})
but the example considered in Section~\ref{sec:K5_19}
shows that they are not necessary.

2. By analogy with the proof of the first part of Theorem~\ref{theorem:Knot-type}
we can prove that
if a virtual knotoid $K$
is such that $\CPlus(K) \not=\CMinus(K)$
then $K$ is a proper virtual knotoid.

\section{Examples}
\label{sec:Example}

In this section we compute $\CPlusMinus$ and $\CHPlusMinus$ for three knotoids
shown in Fig~\ref{fig:10}.
In the table of knotoids published in~\cite{KorablevMayTarkaev}
they are $2_1$,$4_6$ and $5_{19}$.
The first two knotoids are interesting for us
because $\CPlusMinus(2_1)=\CPlusMinus(4_6)$
while $\CHPlus(2_1) \not=\CHPlus(4_6)$,
i.e., the pair of knotoids shows that $\CHPlusMinus$ is stronger than $\CPlusMinus$.
The third knotoid is an example of a knotoid
for which our invariants vanish,
thus they do not detect the triviality of a knotoid.

\begin{figure}[h!]
\centering 
       \begin{subfigure}[b]{0.2\textwidth}
                \centering
                \psfig{figure=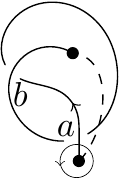, width=\textwidth}
                \caption{$2_1$}
                \label{fig:K2_1}
        \end{subfigure}%
       \begin{subfigure}[b]{0.3\textwidth}
                \centering
                \psfig{figure=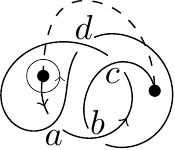, width=\textwidth}
                \caption{$4_6$}
                \label{fig:K4_6}
        \end{subfigure}%
       \begin{subfigure}[b]{0.3\textwidth}
                \centering
                \psfig{figure=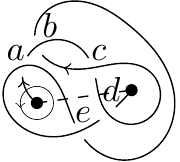, width=\textwidth}
\caption{$5_{19}$}
                \label{fig:K5_19}
        \end{subfigure}%
\caption{}
\label{fig:10}
\end{figure}

\subsection{The knotoid $2_1$}
\label{sec:K2_1}

The Gauss code of the knotoid $2_1$ (see Fig.~\ref{fig:K2_1}) is following:
$$\Plus{a}, \Minus{b}, \Minus{a},\Plus{b}.$$
The signs of crossings are:
$$
\begin{array}{ccc}
\text{Crossing} & a & b \\
\text{Sign} & 1 & 1
\end{array}
$$
There is the upper skew pair  and there is no lower skew pair:
$$\PairPlus{2_1} =\{(a,b)\}, \quad \PairMinus{2_1} =\emptyset,$$
Hence
\[
\begin{array}{lll}
\CPlus(2_1) &=\sign(a,b) &=1,\\
\CMinus(2_1) &=0. &\end{array}
\]

To compute values of $\CHPlusMinus$ of knotoids under consideration
we regard them as knotoids in the annulus $A$
and write resulting values in the form
described in the beginning of Section~\ref{sec:InS^2}.

Let $g \in H_1(A)$ is the homology class of the axial line of $A$
provided with counterclockwise orientation
(in Fig.~\ref{fig:10} the orientation is regarded with respect to the beginning of the diagram).
To determine the homology class of a loop in $A$
we compute the algebraic intersection number of the loop with
a simple arc $s$ connecting the endpoints of the diagram
(in Fig.~\ref{fig:10} it is depicted by dashed line)
and oriented from the end to the beginning.

For the knotoid under consideration we have
$[l(a]=[l(b)] =g$.
Hence
\[
\begin{array}{lll}
\CHPlus(2_1) &=\sign(a,b) H(a,b) &=\Angle{1},\\
 \CHMinus(2_1) &=0. &
\end{array}
\]

{\bf Remark.}
Using the knotoid $2_1$ we can show
that the classical construction of the Casson knot invariant does not give
an invariant in the case of knots in a thickened surface.
To this end we consider the closure of the knotoid
(about the closure map see, for example,\cite{KauffmanInvariants,TuraevKnotoids}),
i.e., consider the knotoid as a knotoid in the annulus
and identify boundary components of the annulus 
by the map carrying the endpoints of the knotoid one into another.
Resulting diagram lies in the torus and represents so-called ``virtual trefoil''
--- the simplest knot in the thickened torus.
The Gauss code corresponding to the diagram coincides with the Gauss code of the knotoid $2_1$
written in the beginning of this section.
If we place the base point just before $\Plus{a}$ then we have one upper skew pair.
But if we place the base point just after $\Plus{a}$ then
we obtain
$\Minus{b}, \Minus{a},\Plus{b}, \Plus{a}$
where no skew pair exists.

\subsection{The knotoid $4_6$}
\label{sec:K4_6}

The Gauss code of the knotoid $4_6$ (see Fig.~\ref{fig:K4_6})  is following:

$$\Minus{a}, \Plus{b}, \Minus{c}, \Plus{d}, \Plus{a}, \Minus{d}, \Minus{b}, \Plus{c}.$$
The signs of the crossings are:
\[
\begin{array}{ccccc}
\text{Crossing} &a&b&c&d\\
\text{Sign} &-1&1&1&-1.
\end{array}
\]

There are one upper skew pair and two lower skew pairs:
\[
\begin{array}{llll}
\PairPlus{4_6} &=\{(b,c)\}, & \sign(b,c)=1, & \\
\PairMinus{4_6} &=\{(a,b),(a,d)\}, & \sign(a,b)=-1, & \sign(a,d)=1.
\end{array}
\]
Therefore,
\[
\begin{array}{llll}
\CPlus(4_6) &=\sign(b,c) =1, & & \\
\CMinus(4_6) &=\sign(a,b)+\sign(a,d) &=-1+1 &=0.
\end{array}
\]

For the knotoid
$[l(a)] =[l(d)] =g$
and
$[l(b)] =[l(c)] =2g$.
\\
Hence
$H(a,b)$ and $H(a,d)$ coincide with $H_1(A)$
while $H(b,c)$ is the subgroup generated by $2 \cdot g$.
Therefore,
\[
\begin{array}{llll}
\CHPlus(4_6) &=\sign(b,c)H(b,c) =\Angle{2}, & & \\
 \CHMinus(4_6) &=\sign(a,b)H(a,b) +\sign(a,d)H(a,d) &=-\Angle{1}+\Angle{1} &=0.
\end{array}
\]

\subsection{The knotoid $5_{19}$}
\label{sec:K5_19}

The Gauss code of the knotoid $5_{19}$ (see Fig.~\ref{fig:K5_19}) is following:
$$\Minus{a}, \Plus{b}, \Minus{c}, \Plus{d}, \Plus{c}, \Minus{b}, \Minus{e}, \Plus{a}, \Plus{e}, \Minus{d}.$$
The signs of crossings are:
\[
\begin{array}{cccccc}
\text{Crossing} &a&b&c&d&e \\
\text{sign} &-1&1&1&1&1.
\end{array}
\]
There is no upper skew pair and there are two lower skew pairs:
\[
\begin{array}{llll}
\PairPlus{5_{19}} &=\emptyset & & \\
\PairMinus{5_{19}} &=\{(a,d),(c,d)\}, &
\sign(a,d)=-1, & \sign(c,d)=1.
\end{array}
\]
Hence
\[
\begin{array}{llll}
\CPlus(5_{19}) &=0, & & \\
\CMinus(5_{19}) &=\sign(a,d) +\sign(c,d) &=-1+1 &=0.$$
\end{array}
\]
The homology classes involved in existing pairs are following:
$$[l(a)]=0,\quad [l(c)] =-g, \quad [l(d)] =g.$$
Hence both $H(a,d)$ and $H(c,d)$ coincide with $H_1(A)$.
Hence
\[
\begin{array}{llll}
\CHPlus(5_{19}) &=0, & & \\
\CHMinus(5_{19}) &=\sign(a,d)H(a,d) +\sign(c,d)H(c,d) &=-\Angle{1}+\Angle{1} &=0.$$
\end{array}
\]

\section{Values of the invariants of tabulated knotoids}
\label{sec:Table}

We compute $\CPlusMinus$ and $\CHPlusMinus$ for all knotoids
listed in the table of prime proper knotoids
having diagrams with at most $5$ crossings
(see~\cite{KorablevMayTarkaev}).

\begin{tabular}{l|rr|rr}
\hline\noalign{\smallskip}
$\#$& $\CPlus$ & $\CMinus$ & $\CHPlus$ & $\CHMinus$ \\
\noalign{\smallskip}\hline\noalign{\smallskip}
$2_1$ & $1$ & $0$ & $\Angle{1}$ & $0$ \\
$3_1$ & $-1$ & $0$ & $-\Angle{1}$ & $0$ \\
$4_1$ & $1$ & $3$ & $\Angle{1}$ & $3\Angle{1}$ \\
$4_2$ & $1$ & $2$ & $\Angle{1}$ & $2\Angle{1}$ \\
$4_3$ & $2$ & $0$ & $2\Angle{1}$ & $0$ \\
$4_4 $ & $1$ & $-1$ & $\Angle{2}$ & $-\Angle{1}$ \\
$4_5$ & $3$ & $0$ & $2\Angle{1}+\Angle{2}$ & $0$ \\
$4_6  $ & $1$ & $0$ & $\Angle{2}$ & $0$ \\
$4_7$ & $2$ & $0$ & $\Angle{1}+\Angle{2}$ & $0$ \\
$5_1$ & $0$ & $-2$ & $0$ & $-2\Angle{1}$ \\
$5_2$ & $2$ & $3$ & $\Angle{0}+\Angle{1}$ & $\Angle{0}+2\Angle{1}$ \\
$5_3$ & $0$ & $-1$ & $\Angle{0}-\Angle{1}$ & $\Angle{0}-2\Angle{1}$ \\
$5_4$ & $-1$ & $-2$ & $-\Angle{1}$ & $-2\Angle{1}$ \\
$5_5$ & $1$ & $-1$ & $\Angle{1}$ & $-\Angle{1}$ \\
$5_6$ & $1$ & $0$ & $\Angle{0}$ & $\Angle{0}-\Angle{1}$ \\
$5_7$ & $2$ & $2$ & $2\Angle{1}$ & $2\Angle{1}$ \\
$5_8$ & $1$ & $2$ & $\Angle{0}$ & $\Angle{0}+\Angle{1}$ \\
$5_9$ & $1$ & $0$ & $\Angle{0}$ & $\Angle{0}-\Angle{1}$ \\
$5_{10}$ & $-1$ & $-1$ & $-\Angle{1}$ & $-\Angle{1}$ \\
$5_{11}$ & $1$ & $1$ & $\Angle{1}$ & $\Angle{1}$ \\
$5_{12}$ & $-2$ & $0$ & $-\Angle{1}-\Angle{2}$ & $0$ \\
$5_{13}$ & $0$ & $-1$ & $0$ & $-\Angle{2}$ \\
$5_{14}$ & $-1$ & $2$ & $-\Angle{2}$ & $2\Angle{1}$ \\
$5_{15}$ & $-1$ & $1$ & $-\Angle{2}$ & $\Angle{1}$ \\
$5_{16}$ & $1$ & $-2$ & $\Angle{1}$ & $-\Angle{1}-\Angle{2}$ \\
$5_{17}$ & $0$ & $-1$ & $-\Angle{1}+\Angle{2}$ & $-\Angle{1}$ \\
$5_{18}$ & $0$ & $1$ & $0$ & $\Angle{1}$ \\
$5_{19}$ & $0$ & $0$ & $0$ & $0$ \\
$5_{20}$ & $-2$ & $0$ & $-2\Angle{1}$ & $0$ \\
$5_{21}$ & $-1$ & $0$ & $-\Angle{1}$ & $0$ \\
$5_{22}$ & $-1$ & $-1$ & $-\Angle{1}$ & $-\Angle{1}$ \\
\noalign{\smallskip}\hline
\end{tabular}

Now we make afew remarks concerning the table.

{\bf 1.}
In\cite{KorablevMayTarkaev}
knotoids diagrams are regarded up to Reidemeister moves, ambient isotopies,
 mirror reflection and simultaneous switching of all crossing.
The latter transformation changes values of the invariants under consideration:
it permutes $\CPlus$ with $\CMinus$ and $\CHPlus$ with $\CHMinus$.
Hence values in the table above
should be regarded up to such permutations also.

{\bf 2.}
Taking into account the remark above we see that
the table contains $5$ pairs of knotoids having diagrams with coinciding values of all $4$ invariants:
$2_1$ and $5_{18}$,
$3_1$ and $5_{21}$,
$5_1$ and $5_{20}$,
$5_6$ and $5_9$,
$5_{10}$ and $5_{22}$.
Other $21$ knotoids in the table have pairwise distinct values  of $\CHPlusMinus$.

{\bf 3.}
It is clear, that $\CPlusMinus$ is weaker than $\CHPlusMinus$.
$\CPlusMinus$ divide $31$ knotoids under consideration
into $16$ subsets:
$3$ pairs, $2$ triples,
$2$ subsets consisting of $5$ knotoids
and $9$ knotoids have a unique values of $\CPlusMinus$.

{\bf 4.}
Theorem~\ref{theorem:Knot-type}
guarantees that all knotoids in the table except the only knotoid $5_{19}$
are proper knotoids
($5_{19}$ is proper also,
it is proved in~\cite{KorablevMayTarkaev}).
Besides, for $26$ of $31$ knotoids it is enough to use the first part of the theorem only.
Those $4$ knotoids for which the first part of Theorem~\ref{theorem:Knot-type} does not work while the second part do
are: $5_7, 5_{10}, 5_{11}, 5_{22}$.

{\bf 5.}
The table above contains exactly $2$ knotoids ($2_1$ and $4_1$)
for which the inequality~\eqref{eq:crn}
becomes equality.
These knotoids are the first and the second knotoids in the family
which we use in the proof of Theorem~\ref{theorem:crn}.
For knotoids having the crossing number $3$ and $5$
the right-hand side of~\eqref{eq:crn} is equal to $2$ and $6$, respectively,
while the maximal value of the left-hand side of~\eqref{eq:crn}
for knotoids in the table having crossing number $3$ and $5$
are $1$ and $5$, respectively.
The fact suggest following conjecture:
if a knotoid $K$ has odd crossing number then
$\|\CHPlus(K)\|+\|\CHMinus(K)\| +1 \leq [\frac{\crn(K)^2}4]$.

%

\end{document}